\newtheorem{theorem}{Theorem}[section]
\newtheorem{lemma}[theorem]{Lemma}
\newtheorem{proposition}[theorem]{Proposition}
\theoremstyle{definition}
\newtheorem{definition}[theorem]{Definition}
\theoremstyle{remark}
\newtheorem{remark}[theorem]{Remark}
\newtheorem{example}{Example}
\newcommand{\abs}[1]{\lvert #1\rvert}
\DeclareMathOperator*{\argmin}{argmin \,}
\begin{document}

\title{
On Algorithms for and Computing with the Tensor Ring Decomposition}

\author{Oscar Mickelin}
\address{Department of Mathematics, Massachusetts Institute of Technology, Massachusetts, USA}
\email{oscarmi@mit.edu}

\author{Sertac Karaman}
\address{Department of Aeronautics and Astronautics, Massachusetts Institute of Technology, Massachusetts, USA}
\email{sertac@mit.edu}
\thanks{This work was supported in part by the National Science Foundation grant no 1350685, the Army Research Office grant no W911NF1510249 and the ARL DCIST program.}
%
\subjclass[2010]{Primary 65F99, 15A69}

%
%

\keywords{Tensors, tensor-ring format, graph-based tensor formats, tensor format conversions.}

\begin{abstract}
Tensor decompositions such as the canonical format and the tensor train format have been widely utilized to reduce storage costs and operational complexities for high-dimensional data, achieving linear scaling with the input dimension instead of exponential scaling. In this paper, we investigate even lower storage-cost representations in the tensor ring format, which is an extension of the tensor train format with variable end-ranks. Firstly, we introduce two algorithms for converting a tensor in full format to tensor ring format with low storage cost. Secondly, we detail a rounding operation for tensor rings and show how this requires new definitions of common linear algebra operations in the format to obtain storage-cost savings. Lastly, we introduce algorithms for transforming the graph structure of graph-based tensor formats, with orders of magnitude lower complexity than existing literature. The efficiency of all algorithms is demonstrated on a number of numerical examples, and in certain cases, we demonstrate significantly higher compression ratios when compared to previous approaches to using the tensor ring format.
\end{abstract}

\maketitle


\section{Introduction}
Tensor decompositions were originally introduced in 1927 \cite{hitchcock1927expression} and have been widely used in several fields, ranging from scientific computing to data analysis, with an initial application in psychometrics \cite{tucker1966some}. With the advent of large-scale computing over the last decades, these decompositions have become even more relevant as they circumvent the ``curse of dimensionality'' by achieving operational complexities scaling linearly with the input dimension instead of exponentially. Recent applications include machine learning \cite{novikov2015tensorizing,wang2018wide}, tensor completion \cite{yuan2018high}, and high-dimensional numerical analysis \cite{oseledets2011tensor,khoromskij2011dlog,dolgov2012fast,khoromskaia2016fast,dolgov2015simultaneous,hackbusch2012use,oseledets2010cross,grasedyck018distributed,bousse2018linear}. A large number of additional examples can be found in a recent review article \cite{kolda2009tensor} as well as a recent monograph \cite{cichocki2016tensor,cichocki2017tensor}.

We consider the tensor ring format (or TR-format) of a tensor $T\in \mathbb{R}^{n_1 \times \cdots \times n_d}$, also known as the tensor chain format in earlier mathematics literature \cite{khoromskij2011dlog}, or matrix product states format with periodic boundaries in the physics literature\cite{affleck1988valence,perez2006matrix,orus2014practical,schollwock2011density}
The format is given by tensors of the form
\begin{equation}\label{eq:TRdecomp}
T(i_1, \ldots , i_d) = \text{Trace}\left( G_1(i_1) \cdot \ldots \cdot G_d(i_d) \right),
\end{equation}
or in index-form
\begin{equation}
T(i_1, \ldots , i_d) = \sum_{\alpha_0 = 1 }^{r_{0}} \cdots  \sum_{\alpha_{d-1} = 1 }^{r_{d-1}} G_1(\alpha_0, i_1, \alpha_1) \cdot \ldots \cdot G_d(\alpha_{d-1}, i_d, \alpha_0).
\end{equation}
Here, the matrices $G_k(i_k)$ are of size $r_{k-1}\times r_k$ for each index $i_k$ with  $1 \leq i_k \leq n_k$. Each $G_k$ can therefore be viewed as an order-three tensor in $\mathbb{R}^{r_{k-1}\times n_k \times r_k}$ and is called a core tensor of the representation in Eq.~\eqref{eq:TRdecomp}. The vector $(r_0, r_1, \ldots , r_d)$ with $r_d = r_0$ is called the TR-rank of the representation in Eq.~\eqref{eq:TRdecomp}.

The TR-format can be seen as a natural extension of the successful tensor train format  (TT-format) \cite{oseledets2011tensor} where it is insisted that $r_0 = r_d = 1$, but the TR-format is known to have theoretical drawbacks in comparison. Generally, graph-based tensor formats with cycles in the associated graph are known to not be closed (in the Zariski topology) \cite{khoromskij2012tensors,landsberg2011geometry,ye2018tensor}, which may lead to concerns about numerical stability issues in analogy to a classical setting \cite{de2008tensor}. Moreover, it was recently shown that for the TR-format, minimal TR-ranks for a given tensor need not be unique \cite{ye2018tensor} (not even up to permutation of the indices $i_1, \ldots , i_d$), leading to difficulties in their calculation. On the other hand, from a pragmatically practical viewpoint, the use of this format in numerical experiments has been seen to lead to lower ranks of the core tensors as compared to the TT-format \cite{khoo2017efficient,wang2017efficient,wang2018wide,zhao2017learning,zhao2016tensor}, resulting in higher compression ratios and lower storage costs required to represent a given tensor. The mathematics literature has therefore seen renewed interest in the TR-format in recent years, aiming to build on the success of the theory and applications of the TT-format while at the same time improving the compression ratios of the involved tensors even further \cite{espig2012note,handschuh2012changing,khoo2017efficient,wang2017efficient,wang2018wide,zhao2017learning,zhao2016tensor}. Recent applications making use of the TR-format include compression of convolutional neural networks \cite{wang2018wide}, image and video compression \cite{zhao2017learning,zhao2016tensor}, tensor completion\cite{yuan2018rank} as well as image and video reconstruction \cite{wang2017efficient}. In this paper, we are interested in analysis and algorithms for computing with the TR-format that lead to higher compression ratios for representing tensors.

\subsection{Prior work}
Previous work \cite{espig2012note,zhao2017learning,zhao2016tensor} has devised efficient SVD- and alternating least-squares-based (ALS-based) approximation schemes to convert a tensor in full format to TR-format, has detailed how common linear algebra operations on the tensor level can be captured on the level of the TR-representation and has presented ways of converting a tensor into the TR-format from other common tensor formats such as the canonical or Tucker-format (see e.g., the review article by Kolda and Bader \cite{kolda2009tensor} for an overview of these tensor formats).

For ALS-based algorithms, a recent result \cite{2019arXiv190507101C} characterizes the existence and non-existence of spurious local minima in terms of the largest TR-rank. The geometric and algebraic properties of the TR-format have also been studied \cite{landsberg2011geometry,ye2018tensor}. The closure of the set of tensors with bounded TR-rank has been related to geometric complexity theory \cite{landsberg2011geometry}. The dimensions and generic ranks of the set of tensors with bounded ranks have been investigated by Ye and Lim \cite{ye2018tensor}, together with additional properties about minimality of TR-ranks and intersections of sets of tensors with differently bounded ranks.

Handschuh \cite{handschuh2012changing} provides a method of transforming a given TT-representation into TR-format with $r_0 \neq 1$. Wang et al. \cite{wang2017efficient} consider completion of a tensor in the TR-format with missing entries, using iterations of ALS. Khoo and Ying \cite{khoo2017efficient} consider the problem of converting a tensor in full format into TR-format using only a limited sample of the tensor elements $T(i_1, \ldots , i_d)$, which enables the algorithms to be applied to higher dimensional tensors. The non-sampled entries are recovered using an ALS-based iterative procedure.

\subsection{Remaining challenges}
Compared to the analogous situation for the TT-format, a number of important questions remain a challenge:
\begin{itemize}
\item \textbf{Choice of TR-rank:} For the TT-format, it is known that each $r_k$ is greater than or equal to the rank of the $k$:th unfolding matrix of the tensor $T$ (defined in Sec.~\ref{sec:notation} below) and equality is achieved when using the TT-SVD algorithm based on successive SVD-decompositions of the unfolding matrices \cite{oseledets2011tensor}. In contrast, for the TR-format, there is not a single unique minimal rank of a given tensor \cite{ye2018tensor}. The previously mentioned algorithms therefore require a manual choice of either $r_0$ (SVD-based algorithms) or the entire vector $(r_0, r_1, \ldots , r_d)$ (ALS-based algorithms). How are these ranks to be chosen, and how does the end result depend on this choice?
\item \textbf{On efficient rounding:} Arguably the most important algorithm for the TT-format is an efficient, SVD-based and non-iterative rounding procedure to convert a TT-representation with suboptimal ranks into one with more beneficial ranks. Is there an efficient analogue also in the TR-format?
\end{itemize}

\subsection{Contributions}
In order to answer these questions, we present the three main contributions in this paper.

\begin{enumerate}
\item \textbf{Importance of choice of TR-rank and heuristic algorithm:} We show that the compression ratio of the TR-format is highly dependent (in our examples, up to more than one order of magnitude) on the choice of the rank $r_0$ in Eq.~\eqref{eq:TRdecomp}. In earlier work \cite{espig2012note,zhao2017learning,zhao2016tensor}, this choice was always kept fixed and not adapted to the underlying tensor, but finding the optimal $r_0$ is crucial for the efficiency of the TR-format.  Using the notion of minimal TR-ranks defined below, we clarify why this is the case. By leveraging a certain invariance of the TR-format under cyclic shifts of the tensor dimensions $n_1, \ldots, n_d$, we show how to improve compression ratios even further. We detail a heuristic algorithm to find a low-cost choice of $r_0$ and cyclic shift in the previous SVD-based algorithms.

\item \textbf{Rounding, operations, and compressed tensor format conversions:} We describe an efficient SVD-based rounding procedure to decrease the ranks of a given TR-representation. Surprisingly, we show that earlier definitions of common linear algebra operations in the literature need to be redefined in order for ranks to be reduced when combined with the rounding procedure. As an application, this enables us to specify conversions from the TT- and canonical formats into TR-format that are more efficient than previous results in the literature \cite{zhao2017learning,zhao2016tensor}, leading to orders of magnitude higher compression ratios. Moreover, the complexity of these conversions is asymptotically lower than previous ideas \cite{handschuh2012changing} by a factor of more than $\max_k n_k^3$.

\item \textbf{Extension to graph-based formats:} We extend these ideas to general graph-based tensor formats. As an application, we present an algorithm for selecting a low-cost graphical format with which to represent a given tensor. Operations on graph-based formats also enable the aforementioned conversions from the TT- and canonical formats into TR-format.
\end{enumerate}

The remainder of the paper is organized as follows. Sec.~\ref{sec:heur} details a heuristic algorithm for choosing $r_0$ and cyclic shift to achieve low storage cost. Sec.~\ref{sec:operations} defines a rounding procedure for the TR-format and redefines some common linear algebra operations in order to make them amenable to storage-cost savings using the rounding procedure. Sec.~\ref{sec:graph} extends the procedures to general graph-based formats and contains algorithms for converting representations of tensors between different graph-based formats. Sec.~\ref{sec:examples} concludes with a number of performance comparisons of the presented algorithms with algorithms previously described in the literature. Implementations of all algorithms in this paper are publicly available online\footnote{\href{https://github.com/oscarmickelin/tensor-ring-decomposition}{\texttt{https://github.com/oscarmickelin/tensor-ring-decomposition}}}.

\subsection{Notation}\label{sec:notation}
Algorithms will be presented in pseudocode using MATLAB commands and notation. We will mostly follow the notation of Hackbush\cite{hackbusch2012tensor}. The $i$:th standard basis vector will be denoted by $e_i$. We will use the Frobenius tensor norm $\| \cdot \|_F$ given by $\| T \|_F^2 := \sum_{i_1, \ldots , i_d} \abs{T(i_1, \ldots , i_d)}^2.$ The $k$:th unfolding matrix of a tensor $T\in \mathbb{R}^{n_1\times  \cdots  \times n_d}$ will be denoted by $T_{\langle k\rangle} \in \mathbb{R}^{ \left( \prod_{j=1}^kn_j \right) \times \left( \prod_{j=k+1}^d n_j \right)}$, and can be computed by $\text{reshape}\bigl(T, \bigl[\prod_{j=1}^kn_j, \prod_{j=k+1}^d n_j\bigr]\bigr)$ in MATLAB. The $\delta$-rank of a matrix $A$ is $\text{rank}_{\delta}(A) := \min_{B: \|A-B\|_F \leq \delta} \text{rank}(B)$ and can be computed by calculating the rank of the result of a $\delta$-truncated SVD on $A$, $\text{SVD}_{\delta}(A)$. The $k$-mode product of a tensor $T$ with a matrix $A\in \mathbb{R}^{\ell \times n_k}$ is a tensor $T\times_k A$ in $\mathbb{R}^{n_1 \times  \cdots  \times \ell \times \cdots  \times n_d}$ defined by $\left(T\times_k A \right)(i_1, \ldots , i_{k-1} , j, i_{k+1} , \ldots , i_d) := \sum_{i_k=1}^{n_k} T(i_1, \ldots , i_d) A(j, i_k).$ The Hadamard (or elementwise) product of two tensors $T_1, T_2$ in $\mathbb{R}^{n_1 \times  \cdots  \times n_d}$ is defined by $\left(T_1 \circ T_2\right) (i_1, \ldots , i_d) := T_1(i_1, \ldots , i_d)T_2(i_1, \ldots , i_d).$

We will denote the group of circular shifts on $d$ variables by $\text{S}_d^c$, which has generator $\gamma = (1, d, d-1, \ldots, 2)$ in cycle notation \cite[p.~30]{lang2002algebra}. This notation means that $\gamma(1) = d, \gamma(d) = d-1 , \ldots , \gamma(2) = 1$, i.e., $\gamma$ corresponds to a circular shift to the left by one step. The inverse shift $\gamma^{-1}$ therefore corresponds to a circular shift to the right by one step, and can be written in cycle notation as $\gamma^{-1} = (2,3, \ldots , d, 1)$.

For any $\tau \in \text{S}_d^c$ and tensor $T \in \mathbb{R}^{n_1 \times  \cdots \times n_d}$, we define the \emph{$ \tau$-permuted tensor} $T^{\tau} \in   \mathbb{R}^{n_{\tau^{-1}(1)} \times \cdots \times n_{ \tau^{-1}(d)}}$ by $T^{ \tau}(i_1, \ldots , i_d) = T(i_{\tau(1)}, \ldots , i_{\tau(d)})$. Tensor products will be denoted by $\cdot \otimes \cdot$, and since we will exclusively deal with finite-dimensional vector spaces, we will explicitly work with coordinate representations and we identify $\left(v_1 \otimes v_2 \otimes \cdots \otimes v_d\right)(i_1, i_2 , \ldots , i_d) := v_1(i_1)\cdot v_2(i_2)\cdot \ldots \cdot v_d(i_d),$ for $v_k \in \mathbb{R}^{n_k}$. The Kronecker product of two matrices $A, B$ will be denoted by $A\otimes_{\text{K}} B$, to avoid confusion.

For graph-based tensor formats, there is an elegant and coordinate-free notation \cite{landsberg2011geometry,ye2018tensor} which avoids explicit reference to cores and indices. However, we will be concerned with practical algorithms, which therefore are required to work explicitly with the cores and indices which arise in the implementation data structures. Our notation is therefore chosen to reflect this. Let $\mathcal{G} = (V,E)$ be an undirected graph with the set of vertices $V = \{1, \ldots , d\}$ and with the set of edges $E$. We will denote an edge between vertices labeled by $k_1$ and $k_2$ by $(k_1,k_2)$. The set of edges emanating from the vertex $k$ will be denoted by $e(k)$. For each $i\in e(k)$, we will specify a maximal edge rank $r_{ik}\in \mathbb{N}$. A tensor $T \in \mathbb{R}^{n_1\times  \cdots  \times n_d}$ will then be said to be in $\mathcal{G}$-format if there exist core tensors for each vertex, denoted by $G_k(i_k, \bigtimes_{i \in e(k)} \alpha_{ik})$ where $1 \leq \alpha_{ik} \leq r_{ik}$, such that $T(i_1, \ldots , i_d)$ equals the contraction over all $\alpha_{ik}$ of $G_k(i_k, \bigtimes_{i \in e(k)} \alpha_{ik})$.  We will then refer to the collection of $r_{ik}$ for $1\leq k \leq d$ and $i\in e(k)$ as the $\mathcal{G}$-ranks of $T$. See e.g., the recent review article by Or{\'u}s \cite{orus2014practical} for more information. In the case when $\mathcal{G}$ is a cycle of $d$ vertices, the $\mathcal{G}$-format is precisely the TR-format.

\section{Conversion from full format to TR-format}\label{sec:heur}
This section describes the conversion of a tensor in full format into TR-format. A negative result in Sec.~\ref{sec:neg_result} (Prop.~\ref{prop:incomparable}) first describes the importance of the choice of $r_0$ and how the situation for the TR-format differs from the TT-format. Afterwards in Sec.~\ref{sec:cyclic_shifts}, we describe how considering cyclic shifts $\tau \in \text{S}_d^c$ gives an additional degree of freedom to be used in the conversion into TR-format. Both $r_0$ and the cyclic shift $\tau$ can be chosen via exhaustive search which gives a reduced storage-cost algorithm (Alg.~\ref{alg:reduced_storage_TR-SVD}) in Sec.~\ref{sec:check_all}. Finally, in Sec.~\ref{sec:choicedivperm} we present an algorithm to produce a low-cost choice of $r_0$ and $\tau$ (Alg.~\ref{alg:reducedTR-SVD}).

\subsection{Negative result}\label{sec:neg_result}
As presented by Zhao et al. \cite{zhao2017learning,zhao2016tensor}, Alg.~\ref{alg:TR-SVD} below can be used to compute an approximate TR-representation $\widetilde{T}$ of a tensor $T$ given in full format, given a desired accuracy $\varepsilon$. $\widetilde{T}$ then satisfies $\| T - \widetilde{T} \|_F \leq \varepsilon\|T\|_F$. Alg.~\ref{alg:TR-SVD} computes the quantity $\delta := \frac{\varepsilon \|T\|_F}{\sqrt{d}}$ and requires a manual input of a divisor $r_0$ of $\text{rank}_{\delta}\left( T_{\langle 1 \rangle}\right)$. The choice of $r_0$ by Zhao et al. \cite{zhao2017learning,zhao2016tensor} (and also for a related algorithm based on the skeleton/cross approximation\cite{espig2012note}), is to minimize $ \abs{r_0 - \frac{\text{rank}_{\delta}\left( T_{\langle 1 \rangle}\right)}{r_0}}$, but examples (see Sec.~\ref{sec:ex_full2TR}) show that this can lead to suboptimal compression ratios.
\begin{algorithm}
\caption{TR-SVD \cite{zhao2017learning,zhao2016tensor}}\label{alg:TR-SVD}
\begin{algorithmic}[1]
 \Require{$d$-tensor $T$ in full format, accuracy $\varepsilon$, divisor $r_0$ of $\text{rank}_{\delta}\left( T_{\langle 1 \rangle}\right)$.}{}
 \Ensure{Core tensors $G_1$, \ldots , $G_d$ s.t. $\widetilde{T}$ of form in Eq.~\eqref{eq:TRdecomp} has $\| T - \widetilde{T} \|_F \leq \varepsilon\|T\|_F$.}{} \\
 Compute $\delta := \frac{\varepsilon \|T\|_F}{\sqrt{d}}$.\\
 $C = \text{reshape}(T, [n_1, \frac{\text{numel}(T)}{n_1}]) $ \Comment{Initial step}\\
 $[U, \Sigma, V] = \text{SVD}_{\delta}(C)$ \\
 Put $r_1 := \text{rank}\left(\Sigma \right)$.\\
$G_1 = \text{permute}(\text{reshape}(U, [n_1, r_0, r_1]), [2, 1, 3])$ \\
 $C :=  \text{permute}(\text{reshape}(\Sigma V^T, [r_0, r_1, \prod_{j=2}^dn_j]), [2, 3, 1])$ \\ 
Merge the last two indices by $C = \text{reshape}(C, [r_1, \prod_{j=2}^{d-1}n_j, n_dr_0])$.
\For{$k =2:d-1$} \Comment{Main loop}
\State         $C = \text{reshape}(C, [r_{k-1} n_k, \frac{\text{numel}(C)}{(r_{k-1} n_k)}])$     
\State         $[U,S,V] = \text{SVD}_{\delta}(C)$
\State         $r_k = \text{rank}(\Sigma)$
\State         $G_k= \text{reshape}(U, [r_{k-1}, n_k, r_k])$
\State         $C = \Sigma V^T$
\EndFor \\
$G_d = \text{reshape}(C, [r_{d-1}, n_d, r_0])$ \Comment{Final step}
 \end{algorithmic}
\end{algorithm}

In Sec.~\ref{sec:check_all}, we will minimize storage costs of a TR-representation of a tensor by choosing $r_0$ appropriately. It is therefore of interest to compare the rank vectors $r$ and $r'$ resulting from the different choices $r_0$ and $r_0'$ in Alg.~\ref{alg:TR-SVD}. To this end, we will make use of the following concept. We will say that a vector $r := (r_0, \ldots , r_{d-1}, r_0)$ is a minimal rank of $T$ if (i) there exists a TR-representation of $T$ with TR-ranks $r$, and (ii) no other TR-rank $r'$ of $T$ satisfies $r' \leq r$ under the elementwise inequality in $\mathbb{R}^{d+1}$. The elementwise inequality is only a partial order on $\mathbb{R}^{d+1}$, so there can be multiple minimal ranks for a given tensor \cite{ye2018tensor}; Prop.~\ref{prop:incomparable} below will show that this is in fact a common occurrence. For the TT-representation \cite{oseledets2011tensor}, the ranks satisfy $r_k \geq \text{rank}\left(T_{\langle k \rangle}\right)$ and an exact TT-decomposition using the TT-SVD algorithm results in ranks satisfying $r_k = \text{rank}\left(T_{\langle k \rangle}\right)$. This is therefore the unique minimal rank with $r_0 = 1 = r_d$. For the TR-format, an analogous argument \cite{zhao2016tensor} shows that $r_0r_k \geq \text{rank}\left( T_{\langle k \rangle}\right)$, which will be used below.

Clearly, smaller rank vectors under the elementwise ordering result in lower storage cost. Rank vectors that are a priori known to be greater than others under the elementwise ordering can therefore be disregarded. Unfortunately, this situation does not occur when using Alg.~\ref{alg:TR-SVD}, as we show in Prop.~\ref{prop:incomparable}.

\begin{lemma}\label{lemma:incomparable}
Let $r_0$ divide $\text{rank}\left( T_{\langle 1 \rangle}\right)$. If $r$ denotes the rank obtained when running Alg.~\ref{alg:TR-SVD} with precision $\varepsilon = 0$ and first rank $r_0$, then any minimal rank $r' \leq r$ has $r_0' = r_0$ and $r_1' = r_1$.
\end{lemma}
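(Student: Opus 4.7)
The plan is to reduce the claim to a short monotonicity argument that combines two ingredients: the exactness of Alg.~\ref{alg:TR-SVD} when $\varepsilon=0$, which forces $r_0 r_1 = \text{rank}(T_{\langle 1\rangle})$, and the general lower bound $r_0' r_k' \geq \text{rank}(T_{\langle k\rangle})$ for TR-ranks that is recalled in the paragraph preceding the lemma.

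First I would trace through the initial step of Alg.~\ref{alg:TR-SVD} under the assumption $\varepsilon=0$, so that $\delta=0$. The call to $\text{SVD}_\delta$ then returns an exact SVD of $C = T_{\langle 1\rangle}$, so $\text{rank}(\Sigma) = \text{rank}(T_{\langle 1\rangle})$. The subsequent reshape of $U$ into the shape $[n_1,r_0,r_1]$ — which is well-defined precisely because of the hypothesis that $r_0$ divides $\text{rank}(T_{\langle 1\rangle})$ — splits this rank into the product $r_0 r_1$, giving the identity
\[
r_0\,r_1 \;=\; \text{rank}(T_{\langle 1\rangle}).
\]

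Second, let $r'$ be any minimal rank of $T$ with $r' \leq r$. Since $r'$ is by definition a TR-rank of $T$, the inequality $r_0' r_1' \geq \text{rank}(T_{\langle 1\rangle})$ applies. The componentwise hypothesis $r' \leq r$ yields $r_0' \leq r_0$ and $r_1' \leq r_1$, hence $r_0' r_1' \leq r_0 r_1$. Chaining these gives
\[
r_0\,r_1 \;=\; \text{rank}(T_{\langle 1\rangle}) \;\leq\; r_0'\,r_1' \;\leq\; r_0\,r_1,
\]
so every inequality is an equality. The equality $r_0' r_1' = r_0 r_1$ together with $r_0' \leq r_0$ and $r_1' \leq r_1$ forces $r_0' = r_0$ and $r_1' = r_1$, as claimed.

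The only step requiring real care is the first one: pinning down that the interplay between the exact SVD and the reshape enforces the equality $r_0 r_1 = \text{rank}(T_{\langle 1\rangle})$ rather than merely an inequality. Once this identity is established, the remainder is a two-line comparison, and notably the minimality of $r'$ plays no role beyond ensuring that $r'$ is in fact a TR-rank of $T$ so that the lower bound applies.
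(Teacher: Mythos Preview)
Your argument is correct and is essentially the paper's own proof: both combine the equality $r_0 r_1 = \text{rank}(T_{\langle 1\rangle})$ coming from the exact initial step of Alg.~\ref{alg:TR-SVD} with the general lower bound $r_0' r_1' \geq \text{rank}(T_{\langle 1\rangle})$ and the componentwise inequalities $r_0' \leq r_0$, $r_1' \leq r_1$. Your write-up merely unpacks more carefully why the reshape step forces the product identity, and your closing remark that minimality of $r'$ is not actually needed is a valid observation.
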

\begin{proof}
The minimal rank obeys $r_0' \leq r_0$ and $r_1' \leq r_1$, by definition and $r_0'r_1' \geq \text{rank}\left( T_{\langle 1 \rangle}\right) = r_0r_1$, where equality holds by construction of the TR-SVD algorithm. The conclusion follows.
\end{proof}

\begin{proposition}\label{prop:incomparable}
Let $r_0, r_0'$ be two distinct divisors of $\text{rank}\left(T_{\langle 1 \rangle}\right)$. If $r$ and $r'$ denote the ranks obtained when running Alg.~\ref{alg:TR-SVD} with precision $\varepsilon = 0$ and first rank $r_0$ and $r_0'$, respectively, then there is no common minimal rank $r_m$ satisfying $r_m \leq r$, and $r_m \leq r'$.
\end{proposition}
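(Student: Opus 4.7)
The plan is to deduce Proposition~\ref{prop:incomparable} as an almost immediate consequence of Lemma~\ref{lemma:incomparable}, by applying that lemma twice and deriving a contradiction from the assumption that a common minimal rank $r_m$ exists.

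First, I would suppose for contradiction that there is a minimal rank $r_m = (r_{m,0}, r_{m,1}, \ldots, r_{m,d-1}, r_{m,0})$ of $T$ satisfying both $r_m \leq r$ and $r_m \leq r'$ under the elementwise order. Since $r$ is the rank vector output by Alg.~\ref{alg:TR-SVD} with precision $\varepsilon = 0$ and first rank $r_0$, Lemma~\ref{lemma:incomparable} applied to the pair $(r_m, r)$ forces $r_{m,0} = r_0$ and $r_{m,1} = r_1$. Similarly, applying Lemma~\ref{lemma:incomparable} to the pair $(r_m, r')$, using that $r'$ comes from Alg.~\ref{alg:TR-SVD} with first rank $r_0'$, forces $r_{m,0} = r_0'$ and $r_{m,1} = r_1'$.

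Combining the two equalities $r_{m,0} = r_0$ and $r_{m,0} = r_0'$ yields $r_0 = r_0'$, which contradicts the hypothesis that $r_0$ and $r_0'$ are distinct divisors of $\text{rank}(T_{\langle 1\rangle})$. Hence no such $r_m$ can exist.

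The argument is almost purely bookkeeping on the first two coordinates of the rank vector; the work has already been done inside Lemma~\ref{lemma:incomparable}, whose proof uses that TR-SVD achieves the equality $r_0 r_1 = \text{rank}(T_{\langle 1\rangle})$ together with the general lower bound $r_0' r_1' \geq \text{rank}(T_{\langle 1\rangle})$ satisfied by any TR-representation. The only potential subtlety — and the main thing to verify carefully — is that Lemma~\ref{lemma:incomparable} genuinely applies in both directions without any extra hypothesis beyond divisibility, which it does since both $r_0$ and $r_0'$ are assumed to be divisors of $\text{rank}(T_{\langle 1\rangle})$ and the exact-precision assumption $\varepsilon = 0$ holds in both runs.
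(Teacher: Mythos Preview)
Your proof is correct and follows exactly the paper's own approach: the paper's proof consists of the single sentence ``This follows from Lemma~\ref{lemma:incomparable} and the fact that $r_0 \neq r_0'$,'' and you have simply written out in full the contradiction argument that this sentence encodes.
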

\begin{proof}
This follows from Lemma~\ref{lemma:incomparable} and the fact that $r_0 \neq r_0'$.
\end{proof}
Different choices of divisors $r_0$ and $r_0'$ are therefore incomparable a priori, so to find the rank which results in the lowest storage cost for a given accuracy, we need to compare the results of using all choices of divisor $r_0$ in Alg.~\ref{alg:TR-SVD}. This also implies that it is of interest to be able to convert a TR-representation with initial rank $r_0$ to one with initial rank $r_0' \neq r_0$, and this is considered in Sec.~\ref{sec:transforming_graph} below.

\subsection{Cyclic shifts}\label{sec:cyclic_shifts}
We next consider cyclic shifts to reduce storage costs further. Let $\tau\in \text{S}_d^c $ be a cyclic shift. If $T$ is given in the form of Eq.~\eqref{eq:TRdecomp}, then $T^{\tau}$ can be represented by
\begin{align}
\begin{split}T^{\tau}(i_1, \ldots , i_d) &= \sum_{\alpha_0 = 1 }^{r_{0}}\cdots \sum_{\alpha_{d-1} = 1 }^{r_{d-1}}  G_1(\alpha_0, i_{\tau(1)}, \alpha_1) \cdot \ldots \cdot G_d(\alpha_{d-1}, i_{\tau(d)}, \alpha_0) \\
&= \sum_{\alpha_0 = 1 }^{r_{0}}\cdots \sum_{\alpha_{d-1} = 1 }^{r_{d-1}}   G_{\tau^{-1}(1)}(\alpha_0, i_1, \alpha_1) \cdot \ldots \cdot G_{\tau^{-1}(d)}(\alpha_{d-1}, i_d, \alpha_0),
\end{split}\end{align}
using the fact that the trace of a product of matrices is unchanged under cyclic shifts of the matrices. $T^{\tau}$ is then of the format in Eq.~\eqref{eq:TRdecomp} with core tensors $G_{\tau^{-1}(1)}, \ldots , G_{\tau^{-1}(d)}$. Conversely, we can fix a cyclic shift $\tau$ and compute a TR-representation of $T^{\tau}$ with cores $G_k(i_{\tau^{-1}(k)})$. A TR-representation of $T$ is then given by the cores $G_{\tau(1)}(i_1), \ldots ,G_{\tau(d)}(i_d)$. We will see in the following that choosing the cyclic shift $\tau$ appropriately and computing a TR-representation in this way can result in far lower storage costs than without the use of the cyclic shift. The following is an illustrative example.

\begin{example}\label{ex:why_shift}
Let $T \in \mathbb{R}^{n_1\times \cdots  \times n_d}$ be a tensor given as a discretization of a function $f: \mathbb{R}^d \rightarrow \mathbb{R}$ that, in fact, does not depend on $x_2, \ldots , x_{d-1}$. Consider a cyclic shift of the indices of $T$. Specifically, study $T^\tau \in \mathbb{R}^{n_d\times n_1 \times  \cdots \times n_{d-1}}$, for $\tau = (1, d, d-1, \ldots, 2)^{-1}$. For $k\geq 2$, any unfolding matrix $T^{\tau}_{\langle k \rangle} \in \mathbb{R}^{ (n_d n_1  \ldots n_{k-1} ) \times (n_{k} \ldots  n_{d-1})}$ has constant columns, so matrix rank $1$. With $r_0 = 1$, Alg.~\ref{alg:TR-SVD} results in $r_k = 1$ for $k \geq 2$ and consequently a low storage cost. Directly computing a TR-representation of $T$ without making use of a cyclic shift will in general give ranks $r_k > 1$, incurring far higher storage cost. In other words, an appropriate choice of cyclic shift can result in great storage savings.
\end{example}

Note that cyclic shifts cannot be used in the same way when considering the TT-format. Storage costs of individual tensors can be reduced by fixing one given shift of the indices. However, operations in the TT-format such as addition can then only be performed on the two tensors if they use the same shifts. This makes this approach impractical for the TT-format. It should also be noted that this approach is not valid for general permutations, since for permutations $\tau$ that are not cyclic shifts, there is in general a $k$ such that $\alpha_{\tau^{-1}(k)} \neq \alpha_{\tau^{-1}(k+1)}$.


\subsection{Algorithm with lower storage cost}\label{sec:check_all}
The previous two sections provide an algorithm lowering the total storage cost, written out in Alg.~\ref{alg:reduced_storage_TR-SVD}. It consists of an outer loop over each cyclic shift $\tau \in \text{S}_d^c$ and an inner loop over each divisor $r_0$ of rank$_{\delta}\left( T^{\tau}_{\langle 1 \rangle} \right)$. For each choice of $(\tau, r_0)$, Alg.~\ref{alg:TR-SVD} can be used to compute a tensor representation, and the representation with the lowest storage cost can be retained.

\begin{algorithm}
\caption{Reduced storage TR-SVD}\label{alg:reduced_storage_TR-SVD}
\begin{algorithmic}[1]
 \Require{Full $d$-tensor $T$, accuracy $\varepsilon$.}{}
 \Ensure{Core tensors $G_1$, \ldots , $G_d$ s.t. $\widetilde{T}$ of form in Eq.~\eqref{eq:TRdecomp} has $\| T - \widetilde{T} \|_F \leq \varepsilon\|T\|_F$.}{} 
 \For{$k = 1:d$} 
 \State Put $\tau_k = ( 1, d, d-1, \ldots, 2  )^{k-1}$
\For{each $r_0$ divisor of $\text{rank}_{\delta}\left( T^{\tau_k}_{\langle 1 \rangle}\right)$}
 \State Call Alg.~\ref{alg:TR-SVD} on $T^{\tau_k}$ with first rank $r_0$
 \State Store $T^{\tau_k}$ if its TR-representation has the lowest storage cost so far
 \EndFor 
 \EndFor 
 \end{algorithmic}
\end{algorithm}

This leads to a total complexity which is $C(d)$ times the cost of a single call to Alg.~\ref{alg:TR-SVD}, where $C(d) = \sum_{ \tau_k \in \text{S}_d^c } \left(\text{number of divisors of } \text{rank}_{\delta} \left( T^{\tau_k}_{\langle 1 \rangle} \right)\right)  \thicksim \mathcal{O}\left( d\cdot \text{div}(r)\right),$ if $\text{rank}_{\delta}\left(T^{\tau_k}_{\langle 1 \rangle} \right) \thicksim r$ for all $k$. Here, $\text{div}(r)$ denotes the number of divisors of a positive integer $r$. The asymptotic complexity is then $\mathcal{O}(dr^{1/\text{log} \text{log}(r)})$ times that of a call to Alg.~\ref{alg:TR-SVD}. This procedure requires keeping the currently smallest (which can be significantly larger than the actually smallest) representation stored during the entire duration of the algorithm. It can, however, be run in parallel on up to $d$ processors simultaneously.

\subsection{Heuristic reduced-cost algorithm} \label{sec:choicedivperm}
It would be desirable to determine a choice $(\tau, r_0)$ leading to low storage cost without exhausting all possibilities. In this section, we describe a heuristic approach to this choice with small extra computational cost compared to Alg.~\ref{alg:TR-SVD}.
\subsubsection{Heuristic choice of cyclic shift}
Let $r_k\left(T\right)$ denote the TR-ranks of the tensor $T$ as computed by Alg.~\ref{alg:TR-SVD} for some choice of $r_0$ and $\varepsilon$ that will be clear from the context. Note that the storage cost of a tensor in TR-format is given by the sum $\sum_k r_{k-1}r_kn_k$, so a low storage cost can be achieved by minimizing $\max_k r_k\left( T^{\tau}\right)$ over $\tau \in \text{S}_d^c$. This would however require a sweep over all indices $k$ for each choice of $\tau$ and therefore incur the same cost as computing the TR-SVD decompositions of all $T^{\tau}$. However, the TT-ranks $r_k$ as computed by the TT-SVD algorithm are typically non-decreasing up to an index $k_*$, after which they are non-increasing. As a less costly alternative to minimizing $\max_k r_k\left( T^{\tau}\right)$ over $\tau \in \text{S}_d^c$, one can then instead minimize $r_2\left( T^{\tau}\right)$, in an attempt to minimize the slope of $r_k\left( T^{\tau}\right)$ as a function of $k$, and therefore minimize its maximum value. We therefore make the following definition.

\begin{definition} The \emph{$k$:th interaction matrix} of a tensor $T \in \mathbb{R}^{n_1\times  \cdots  \times n_d}$ is denoted by
\begin{equation}
\mathcal{IM}_k(T) \in \mathbb{R}^{ (n_kn_{k+1})\times (n_{k+2}\ldots n_d n_1 \ldots n_{k-1})}.
\end{equation}
It is given by $T^{\tau_k}_{\langle 2 \rangle}$, where $\tau_k = ( 1, d, d-1, \ldots, 2  )^{k-1}$. The \emph{$k$:th interaction rank} of $T$ is defined to be $ir_k (T) := \text{rank}\left(\mathcal{IM}_k(T)\right).$
\end{definition}

We will see in the examples below that choosing
\begin{equation}\label{eq:kstar}
k_* = \underset{k = 1, \ldots , d}{\argmin} ir_k(T).
\end{equation}
and calling TR-SVD on $T^{\tau_{k_*}}$ results in low storage cost.

\subsubsection{Heuristic choice of divisor}
After choosing $k_*$, we have the choice of choosing a divisor $r_0$ of $ \text{rank}_{\delta}\left( T^{\tau_{k_*}}_{\langle 1 \rangle}\right)$ when running TR-SVD above. As an alternative to looping over all possible divisors, we can choose the divisor to be close to the interaction ranks. In detail, we choose $r_{0}^{*} \mid \text{rank}_{\delta}\left( T^{\tau_{k_*}}_{\langle 1 \rangle} \right)$ as 

\begin{equation}\label{eq:pstar}
r_0^* = \underset{r_0 \mid \text{rank}_{\delta}\left( T^{\tau_{k_*}}_{\langle 1 \rangle}\right)}{\argmin} \Big| ir_{k_*-1}(T) -  \frac{\text{rank}_{\delta} T^{\tau_{k_*}}_{\langle 1 \rangle} }{r_0} \Big|+ \Big|ir_{k_*}(T)-r_0 \Big|.
\end{equation}

The intuition behind this is that $r_0$ controls the strength of interaction between $i_{k_*-1}$ and $i_{k_*}$ in the TR-format, since larger values of $r_0$ accommodate a higher number of components of the corresponding core tensors $G_{k_*-1}$ and $G_{k_*}$. Likewise, $\frac{\text{rank}_{\delta}\left(T^{\tau_{k_*}}_{\langle 1 \rangle} \right)}{r_0}$ equals $r_1$ for the TR-representation of of $T^{\tau_{k_*}}$. It therefore measures the strength of interaction between $i_{k_*}$ and $i_{k_*+1}$.

Another measure of the strength of interaction between $i_{k_*-1}$ and $i_{k_*}$ in the full tensor $T$ is $ir_{k_*} (T)$, since incoherence in these dimensions implies a large rank for the matrix $\mathcal{IM}_{k_*}(T)$, as in Example~1. Likewise, a strong interaction between $i_{k_*}$ and $i_{k_*+1}$ would result in a large rank of the matrix $\mathcal{IM}_{k_*-1}(T)$. We therefore choose $r_0$ to match these two measures of interaction as well as possible.

\subsubsection{Heuristic algorithm}
We combine the steps in the previous two sections into an algorithm, which then consists of
\begin{itemize}
\item a preprocessing step wherein the interaction ranks $ir_k(T)$ are computed for each $k=1,2,\ldots , d$.  We choose $k_*$ from Eq.~\eqref{eq:kstar} and subsequently $r_0^*$ from Eq.~\eqref{eq:pstar}. This results in a choice $(\tau_{k_*}, r_0^*)$.
\item one call of Alg.~\ref{alg:TR-SVD} on $T^{\tau_{k_*}}$, using the divisor $ r_0^*$ of $ \text{rank}_{\delta}\left(T^{\tau_{k_*}}_{\langle 1 \rangle} \right)$ producing cores $\widetilde{G}_k(i_{\tau^{-1}_{k_*}(k)})$. A TR-representation of $T$ is then given by the cores $\widetilde{G}_{\tau_{k_*}(k)}(i_k)$.
\end{itemize}
We summarize this in Alg.~\ref{alg:reducedTR-SVD}.

\begin{algorithm}
\caption{Heuristic TR-SVD}\label{alg:reducedTR-SVD}
\begin{algorithmic}[1]
 \Require{Full $d$-tensor $T$, accuracy $\varepsilon$.}{}
 \Ensure{Core tensors $G_1$, \ldots , $G_d$ s.t. $\widetilde{T}$ of form in Eq.~\eqref{eq:TRdecomp} has $\| T - \widetilde{T} \|_F \leq \varepsilon\|T\|_F$.}{} \\
Compute $\mathcal{IM}_k(T)$, $ir_k(T)$ for $k = 1, \ldots , d$. \Comment{Preprocessing step}\\
Choose $k_*$ from Eq.~\eqref{eq:kstar} and subsequently $r_0^*$ from Eq.~\eqref{eq:pstar}. \\
Run Alg.~\ref{alg:TR-SVD} on $T^{\tau_{k_*}}$, using the divisor $ r_0^*$ of $ \text{rank}_{\delta}\left( T^{\tau_{k_*}}_{\langle 1 \rangle} \right)$ to obtain cores $\widetilde{G}_k(i_{\tau_{k_*}^{-1}(k)})$.\\
Set $G_k(i_k) = \widetilde{G}_{\tau_{k_*}(k)}(i_k)$.
 \end{algorithmic}
\end{algorithm}

Since $\text{rank}\left(T^{\tau_{k_*}}_{\langle 1 \rangle} \right) \leq n_{k_*}$, the cost of the preprocessing step of Alg.~\ref{alg:reducedTR-SVD} is 
\begin{equation}
\mathcal{O}\left( (n_1n_2 + n_2n_3 + \ldots + n_dn_1) \prod_{i=1}^dn_i + \text{div}(n_{k_*})\right) \thicksim \mathcal{O} \left( dn^{d+2} + n^{\frac{1}{\text{log log}(n)}}\right) ,
\end{equation}
if $n_i \thicksim n$ for all $i$. Comparisons of performance in terms of actual storage cost and total computation times are performed in Sec.~\ref{sec:ex_full2TR}.

\section{Operations}\label{sec:operations}
This section details some common operations performed on the TR-format. We firstly define an SVD-based rounding procedure for the TR-format (Alg.~\ref{alg:TR-truncation} in Sec.~\ref{sec:truncation}), and then find it necessary to redefine the addition of two TR-representations in order to make full use of the rounding procedure (Thm.~\ref{thm:add} and Prop.~\ref{prop:negrounding} in Sec.~\ref{sec:addition}). We finally comment on the computation of the Frobenius norm of a tensor in Sec.~\ref{sec:frob}. For the remainder of this section, we will let $T$ be a tensor for which a TR-decomposition of the form in Eq.~\eqref{eq:TRdecomp} has been computed.
\subsection{Rounding}\label{sec:truncation}
Analogously to the case of the TT-format \cite{oseledets2011tensor}, we can perform a rounding of a TR-representation with suboptimal ranks $r_0, r_1, \ldots , r_d$. The proposed procedure is, just like the TT-rounding procedure \cite[Alg.~2]{oseledets2011tensor}, based on a structured QR-decomposition which enables lower costs of the SVD-computations in Alg.~\ref{alg:TR-SVD}, and results in a TR-representation with ranks $\widetilde{r}_0 , \widetilde{r}_1, \widetilde{r}_2, \ldots , \widetilde{r}_{d-1}, \widetilde{r}_d =  \widetilde{r}_0$. Unlike for the TT-rounding procedure, invariance under cyclic shifts of the successive matrix factors in the QR-decompositions of the reshaped cores contribute to an even lower storage cost. Also, the motivation for the TT-rounding procedure is that it carries out the same steps as the TT-SVD algorithm. This is not easily generalized to the TR-format - the reshaping of the low-rank decomposition of the first unfolding matrix $T_{\langle 1 \rangle}$ in Alg.~\ref{alg:TR-SVD} cannot be captured on the level of cores in a straightforward way. This necessitates a different approach, which is detailed in Alg.~\ref{alg:TR-truncation}. Unlike for TT-SVD, the TR-SVD algorithm has no guarantee of quasi-optimality since the TR-format is not closed. Instead, only a bound on the approximation error is guaranteed. It is therefore enough to produce a rounding procedure with just a bound on the approximation error, which then does not necessitate the procedure to perform the same steps as the TR-SVD algorithm.

\begin{algorithm}
\caption{TR-rounding}\label{alg:TR-truncation}
\begin{algorithmic}[1]

 \Require{$d$-tensor $T$ with cores $G_1, \ldots , G_d$, TR-ranks $r_0, \ldots , r_d$, accuracy $\varepsilon$.}{}
 \Ensure{$d$-tensor $\widetilde{T}$ with cores $\widetilde{G}_k$, TR-ranks $\widetilde{r}_k \leq r_k$ and $\| T - \widetilde{T} \|_F \leq \varepsilon\|T\|_F$.}{} \\
 Compute the core-level accuracy $\delta = \frac{\varepsilon\|T\|_F}{\sqrt{dr_0}} $
 \For{$k = 1:d-1$} \Comment{Structured QR-sweep}
 \State $[Q_k, R_k] = \text{QR}( \text{reshape}(G_k, [r_{k-1}n_k, r_k]))$
 \State $G_k =  \text{reshape}(Q_k, [r_{k-1}, n_k, \text{numel}(Q_k)/(r_kn_k)]))$
 \State $G_{k+1} = G_{k+1} \times_1 R_k$
 \EndFor \\
$[Q_d, R_d] = \text{QR}( \text{reshape}(G_d, [r_{d-1}n_d, r_d]))$\Comment{Cyclic step for reducing end-ranks}\\
$[U_d, \Sigma_d, V_d] = \text{SVD}_{\delta}(R_d)$\\
$\widetilde{r}_d = \text{rank}(\Sigma_d)$
\If{$\widetilde{r}_d < r_d$}
\State $G_d = \text{reshape}(Q_dU_d\Sigma_d, [r_{d-1}, n_d,\widetilde{r}_d] )$
\State $G_1 = G_1 \times_1 V_d^T$
\EndIf 
\For{$k =d:-1:2$} \Comment{SVD-sweep for reducing remaining ranks}
\State         $[U_k,\Sigma_k,V_k] = \text{SVD}_{\delta}(\text{reshape}(G_k, [r_{k-1}, n_kr_k]))$
\State         $\widetilde{r}_{k-1} = \text{rank}(\Sigma_k)$
\State	 $\widetilde{G}_k = \text{reshape}(V_k^T, [\widetilde{r}_{k-1}, n_k, \widetilde{r}_{k}])$
\State         $G_{k-1}= G_{k-1}\times_3\text{reshape}(U_k\Sigma_k, [r_{k-1}, n_k\widetilde{r}_{k-1}])^T$
\EndFor \\
$\widetilde{G}_1 = G_1$
 \end{algorithmic}
\end{algorithm}

We now prove the correctness of Alg.~\ref{alg:TR-truncation}.
\begin{theorem}
Given a tensor $T$ with cores $G_k$, Alg.~\ref{alg:TR-truncation} returns cores $\widetilde{G}_k$ with corresponding tensor $\widetilde{T}$ satisfying $\| T- \widetilde{T}\|_F \leq \varepsilon\|T\|_F$.
\end{theorem}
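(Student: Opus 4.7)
The plan is to adapt the classical TT-rounding correctness proof~\cite{oseledets2011tensor} to the TR setting, carefully tracking the extra $\sqrt{r_0}$ amplification induced by the trace closure in Eq.~\eqref{eq:TRdecomp}. The argument proceeds in three stages mirroring the structure of Alg.~\ref{alg:TR-truncation}: exactness of the QR phases, a quantitative perturbation bound for each $\text{SVD}_{\delta}$ truncation, and Pythagorean accumulation of these errors.

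First, I would verify that the structured QR-sweep (lines 4--8) and the cyclic QR of $G_d$ (line 11) leave $T$ unchanged. Each step writes the left-unfolding of $G_k$ as $Q_kR_k$, replaces $G_k$ by the reshape of $Q_k$, and absorbs $R_k$ into the first index of $G_{k+1}$ (in the cyclic step the factor $R_d$ is retained explicitly for subsequent truncation). Substituting these updates into Eq.~\eqref{eq:TRdecomp} and telescoping recovers $T$ exactly. After this phase, the cores satisfy the left-orthogonality relation $\sum_{\alpha_{k-1}, i_k} G_k(\alpha_{k-1}, i_k, \alpha_k)\,G_k(\alpha_{k-1}, i_k, \alpha_k') = \delta_{\alpha_k \alpha_k'}$.

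The main obstacle is a TR perturbation lemma analogous to the standard TT orthogonality-propagation bound: if all cores of a TR-representation are left-orthogonal and a single core $G_k$ is perturbed to $G_k + E_k$, then the resulting tensor perturbation obeys $\|\Delta T\|_F \leq \sqrt{r_0}\,\|E_k\|_F$. The $\sqrt{r_0}$ factor is intrinsic to the format: writing $T(i_1, \ldots, i_d) = \text{Tr}(M(i_1, \ldots, i_d))$ for the $r_0 \times r_0$ matrix $M$ built from the product of cores, Cauchy--Schwarz gives $|T|^2 \leq r_0\,\|M\|_F^2$, and the left-orthogonality of the unperturbed cores collapses the contractions over all non-$k$ indices to identity matrices. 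A symmetric version with right-orthogonality governs perturbations during the SVD-sweep, where the already-processed cores to the right of the current index are right-orthogonal by construction of the $V_k^T$ factors in lines 16--19, while the cores to the left retain the left-orthogonality they inherited from the QR phase (up to the benign $V_d^T$ transformation applied to $G_1$ in line 14).

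Combining these pieces, the cyclic $\text{SVD}_{\delta}$ of $R_d$ introduces a core-level error of Frobenius norm at most $\delta$, which the lemma lifts to a tensor-level error at most $\sqrt{r_0}\,\delta$. The SVD-sweep then performs $d-1$ further $\text{SVD}_{\delta}$ truncations, each contributing core-level error at most $\delta$ and thus tensor-level error at most $\sqrt{r_0}\,\delta$. Since the error tensor introduced at each truncation lies in the orthogonal complement of the singular subspace retained thereafter, the $d$ successive tensor errors are mutually orthogonal and add in Pythagorean fashion, yielding
\begin{equation}
\|T - \widetilde{T}\|_F^2 \leq d \cdot r_0\,\delta^2 = d\cdot r_0 \cdot \frac{\varepsilon^2\,\|T\|_F^2}{d\,r_0} = \varepsilon^2\,\|T\|_F^2,
\end{equation}
which is the claimed bound. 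Verifying the Pythagorean orthogonality step rigorously in the TR setting, where the trace closure couples $G_1$ and $G_d$, is the most delicate part of the argument and is where the bulk of the technical work will lie.
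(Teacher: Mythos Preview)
Your approach is genuinely different from the paper's, and the difference matters precisely at the step you flag as delicate.

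The paper does \emph{not} attempt to prove Pythagorean orthogonality of the successive TR-level error tensors. Instead, it introduces an \emph{extended} tensor $T_e\in\mathbb{R}^{r_0n_1\times n_2\times\cdots\times n_d\times r_0}$ obtained by merging the cyclic index $\alpha_0$ into the first physical index and appending a $(d{+}1)$-st core $G_{d+1}(i_{d+1})=e_{i_{d+1}}$. This $T_e$ is an honest TT tensor, and the paper verifies that the steps of Alg.~\ref{alg:TR-truncation} coincide, core by core, with the standard TT-rounding of $T_e$. The known TT bound then gives $\|T_e-\widehat T_e\|_F\le\sqrt{d}\,\delta$ with Pythagorean accumulation happening \emph{in the TT setting}. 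Only afterwards is the trace closure reinstated via $T(i)=\sum_{\alpha_0}T_e(\alpha_0 i_1,\ldots,i_d,\alpha_0)$, and a single Cauchy--Schwarz on this contraction yields $\|T-\widetilde T\|_F^2\le r_0\|T_e-\widehat T_e\|_F^2\le r_0\cdot d\,\delta^2=\varepsilon^2\|T\|_F^2$.

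Your route inverts the order: you apply the $\sqrt{r_0}$ Cauchy--Schwarz bound to each of the $d$ truncation errors individually, obtaining $\|\Delta_kT\|_F\le\sqrt{r_0}\,\delta$, and then need the $\Delta_kT$ to be mutually orthogonal as tensors to recover $\sum_k\|\Delta_kT\|_F^2\le dr_0\delta^2$. This is where the argument is at risk. Orthogonality of the core-level errors $E_k$ in their respective interface spaces does lift to orthogonality of the extended TT errors $\Delta_kT_e$, but the diagonal contraction $\Delta_kT(i)=\sum_{\alpha_0}\Delta_kT_e(\alpha_0 i_1,\ldots,\alpha_0)$ need not preserve it: the inner product $\langle\Delta_kT,\Delta_\ell T\rangle$ unfolds into cross-terms over distinct pairs $(\alpha_0,\beta_0)$ that do not obviously vanish. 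Without this orthogonality you fall back to the triangle inequality and lose a factor $\sqrt d$. The cleanest way to repair the step is, in effect, to rediscover the paper's lift: accumulate the errors first in the extended TT tensor (where Pythagoras is available) and apply the trace bound once at the end. What the paper's device buys is exactly the avoidance of the TR-level orthogonality claim you identify as the hard part.
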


\begin{proof}
The main insights in the proof are that (i) rounding of a TR-representation of $T$ can be related to the rounding of a related TT-representation by reshaping $T$ to have first component of size $r_0n_1$;  (ii) introducing a cyclic step in Alg.~\ref{alg:TR-truncation} enables a reduction of the end rank $r_0$ and this can be encoded by introducing a helper index $i_{d+1}$ with special structure.

We will relate Alg.~\ref{alg:TR-truncation} to an application of the TT-rounding procedure on an extended tensor $T_e$ in $\mathbb{R}^{r_0n_1\times n_2 \times  \cdots  \times n_{d-1} \times n_d \times n_{d+1}}, $with $n_{d+1} = r_d$ and $r_{d+1}=1$. The entries of $T_e$ are defined by its TT-representation $T_e(\alpha_0i_1, i_2 , \cdots ,i_d, i_{d+1}) = \sum_{\alpha_1, \ldots , \alpha_{d}}\prod_{k=1}^{d+1} G_k(\alpha_{k-1}, i_k, \alpha_k),$ where the final core is $G_{d+1}(i_{d+1}) = e_{i_{d+1}} \in \mathbb{R}^{r_d\times 1}$. Note here that the index $\alpha_0$ is not summed over. Using the Kronecker delta defined as $\delta_{i,j} = 1$ if $i=j$ and $0$ otherwise, we have
\begin{equation}\label{eq:firstSum}
\begin{split}
\sum_{\alpha_0, i_{d+1}=1}^{r_d}   T_e(\alpha_0i_1, i_2 , \cdots ,i_d, i_{d+1})\delta_{\alpha_0, i_{d+1}}
&=   \sum_{\alpha_0, \alpha_1, \ldots , \alpha_{d}}\left( \prod_{k=1}^{d} G_k(\alpha_{k-1}, i_k, \alpha_k)\right)\cdot e_{\alpha_0}\\
&= \text{Trace}\left(\sum_{\alpha_1, \ldots , \alpha_{d-1}}\prod_{k=1}^{d} G_k(\alpha_{k-1}, i_k, \alpha_k)\right)= T(i_1, \ldots , i_d),
\end{split}
\end{equation}
which will be used below. Next, a TT-rounding \cite[Alg.~2]{oseledets2011tensor} of $T_e$ with precision $\frac{\varepsilon\|T\|_F}{\sqrt{dr_0}}$ (instead of with $\sqrt{d-1}$, since $T_e$ has $d+1$ dimensions) results in a tensor $\widehat{T}_e$ with cores $\widehat{G}_k$ satisfying
\begin{equation}\label{eq:exterror}
\| T_e - \widehat{T}_e\|_F \leq \frac{\varepsilon\|T\|_F}{\sqrt{r_0}}.
\end{equation}
By the choice of the final core $G_{d+1}$, Alg.~\ref{alg:TR-truncation} now returns precisely the cores
\begin{equation}
\widetilde{G}_k = \begin{cases}
\widehat{G}_k, &k \neq 1, \\
\text{reshape}(\widehat{G}_1, [r_0, n_1, \widetilde{r}_1]) \times_1 \widehat{G}^{ T}_{d+1}, & k = 1,
\end{cases}
\end{equation}
where the last equation ignores the singleton dimension $r_{d+1} = 1$ in $\widehat{G}_{d+1}$. This can be verified by writing out the steps in \cite[Alg.~2]{oseledets2011tensor} and using the fact that $\text{reshape}\left(G_{d+1}, [r_d, n_{d+1}]\right) = I_{r_d}$, by construction. This means that
\begin{equation}\label{eq:secondSum}
\begin{split}
\sum_{\alpha_0, i_{d+1}=1}^{r_d}    \widehat{T}_e(\alpha_0i_1, i_2 , \cdots ,i_d, i_{d+1})\delta_{\alpha_0, i_{d+1}} &=   \sum_{\alpha_0, \alpha_1, \ldots , \alpha_{d}} \left( \prod_{k=1}^{d} \widehat{G}_k(\alpha_{k-1}, i_k, \alpha_k)\right)  \widehat{G}_{d+1}(\alpha_d, \alpha_0, 1) \\
&= \text{Trace}\left(\sum_{\alpha_1, \ldots , \alpha_{d-1}}\prod_{k=1}^{d} \widetilde{G}_k(\alpha_{k-1}, i_k, \alpha_k)\right)= \widetilde{T}(i_1, \ldots , i_d),
\end{split}
\end{equation}
which we now use. Write $\Delta T := T - \widetilde{T}$, and $\Delta T_e := T_e - \widehat{T}_e$. Eqs.~\eqref{eq:firstSum} and \eqref{eq:secondSum} now give
\begin{align}
\begin{split}
\| \Delta T \|_F^2 &=  \sum_{i_1, \ldots , i_d}     \abs{\Delta T(i_1, \ldots, i_d)}^2 =   \sum_{i_1, \ldots , i_d}   \Big|  \sum_{\alpha_0, i_{d+1} = 1}^{r_d}    \Delta T_e(\alpha_0 i_1, \ldots, i_{d+1})\delta_{\alpha_0, i_{d+1}}\Big|^2 \\
&\leq r_0   \sum_{\alpha_0, i_1, \ldots , i_{d}}     \abs{\Delta T_e(\alpha_0 i_1, \ldots, \alpha_0)}^2 \leq r_0     \sum_{\alpha_0, i_1, \ldots , i_{d+1}}    \abs{\Delta T_e(\alpha_0 i_1, \ldots, i_{d+1})}^2 \label{eq:notTight}  \leq  \varepsilon^2 \|T\|_F^2,
\end{split}
\end{align}
where we used Eq.~\eqref{eq:exterror} in the last step and Cauchy-Schwarz in the third step.
\end{proof}

\begin{remark}
The precision $\frac{\varepsilon}{\sqrt{dr_0}}$ is usually overly conservative in practice, stemming from the fact that the second inequality in Eq.~\eqref{eq:notTight} is generically far from being tight, since the left hand side sums over a factor of $r_0$ fewer elements than the right hand side.
\end{remark}

The complexity of Alg.~\ref{alg:TR-truncation} is $\mathcal{O}\left( dnr^3\right)$ plus the complexity of calculating the norm $\|T\|_F$. We will see below that it is possible to compute $\|T\|_F$ with complexity $\mathcal{O}\left( \min_k r_k dnr^3\right)$. This reduces to the cost of computing the Frobenius norm in the TT-format\cite{oseledets2011tensor} when $\min_k r_k = 1$, i.e., in particular when $r_0 = 1 = r_d$.

\subsection{Addition}\label{sec:addition}
One criterion for the rounding procedure in Alg.~\ref{alg:TR-truncation} to be of practical use, is for it to reduce the ranks of the added tensor $T+T$ to those of $T$. For this to hold, we need to redefine the addition operation described in earlier literature, as in the following.
\begin{theorem}\label{thm:add}
Let $T'$ and $T''$ be tensors having TR-representations with cores $G_k', G_k''$ for $k=1, \ldots , d$. The tensor $T = T' + T''$ can be written in TR-format with cores $G_k$ where
\begin{equation}\label{eq:addition}
G_k(i_k) = \begin{bmatrix}
G_k' (i_k)& 0 \\ 0 &  G_{k}''(i_k)
\end{bmatrix},
\end{equation}
for $k = 2, \ldots , d-1$. If $G_1' \in \mathbb{R}^{r_0'\times n_1 \times r_1'}$ and $G_1'' \in \mathbb{R}^{r_0''\times n_1 \times r_1''}$ with $r_0' \geq r_0''$
\begin{align}\label{eq:addition_end11}
G_1(i_1) = \left[\begin{array}{c|c}
        &           G_1''(i_1)   \\ 
      \cline{2-2} \raisebox{.6\normalbaselineskip}{$G_1'(i_1)$}  & 0_{ \left(r_0' - r_0''\right)\times r_1''}\\
    \end{array}\right], \quad
    G_d(i_d) = \left[\begin{array}{c|c}
      \multicolumn{2}{c}{\smash{\raisebox{0\normalbaselineskip}{$G_d'(i_d)$}}} \\ \hline 
		G_d''(i_d)  & 0_{r_{d-1}''\times \left(r_0' - r_0''\right)}
    \end{array}\right].
\end{align}
If $r_0' \leq r_0''$:
\begin{align}
G_1(i_1) = \left[\begin{array}{c|c}
       G_1'(i_1) &  \\
      \cline{1-1} 0_{ \left(r_0'' - r_0'\right)\times r_1'} & \raisebox{.6\normalbaselineskip}{$G_1''(i_1)$}  \\
    \end{array}\right], \,\,
    G_d(i_d) = \left[\begin{array}{c|c}
    		G_d'(i_d)  & 0_{r_{d-1}'\times \left(r_0'' - r_0'\right)}  \\ \hline 
      \multicolumn{2}{c}{\smash{\raisebox{0\normalbaselineskip}{$G_d''(i_d)$}}}
    \end{array}\right].
    \end{align}

\end{theorem}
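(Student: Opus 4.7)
The plan is to show that the block structure in Eqs.~\eqref{eq:addition} and \eqref{eq:addition_end11} makes the product $G_1(i_1) G_2(i_2) \cdots G_d(i_d)$ decompose into two independent contributions whose traces yield exactly $T'(i_1,\ldots,i_d)$ and $T''(i_1,\ldots,i_d)$.

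First, I would observe that since $G_k(i_k)$ is block diagonal for $k = 2, \ldots, d-1$, their matrix product is also block diagonal,
\begin{equation*}
\prod_{k=2}^{d-1} G_k(i_k) = \begin{bmatrix} M'(\vec{i}) & 0 \\ 0 & M''(\vec{i}) \end{bmatrix},
\end{equation*}
with $M'(\vec{i}) := \prod_{k=2}^{d-1} G_k'(i_k)$ and $M''(\vec{i})$ defined analogously. This reduces the problem to understanding how the two end cores interact with this block-diagonal middle.

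By the symmetry between the two displayed sub-cases, it suffices to consider $r_0' \geq r_0''$. I would view the prescribed $G_1(i_1)$ as the column concatenation of $G_1'(i_1)$ (which occupies all $r_0'$ rows) and of $G_1''(i_1)$ padded with $r_0' - r_0''$ zero rows at the bottom, and view $G_d(i_d)$ as the row concatenation of $G_d'(i_d)$ and of $G_d''(i_d)$ padded with $r_0' - r_0''$ zero columns on the right. A direct block multiplication then yields
\begin{equation*}
G_1(i_1) \cdot \prod_{k=2}^{d-1} G_k(i_k) \cdot G_d(i_d) = G_1'(i_1) M'(\vec{i}) G_d'(i_d) + \begin{bmatrix} G_1''(i_1) M''(\vec{i}) G_d''(i_d) & 0 \\ 0 & 0 \end{bmatrix},
\end{equation*}
since the zero padding confines the ``double-prime'' contribution to the top-left $r_0'' \times r_0''$ block of the $r_0' \times r_0'$ product, while the ``prime'' contribution fills the entire $r_0' \times r_0'$ matrix.

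Finally, I would take the trace of the above and identify the two summands as $\text{Trace}\bigl(G_1'(i_1) \cdots G_d'(i_d)\bigr) = T'(i_1,\ldots,i_d)$ and $\text{Trace}\bigl(G_1''(i_1) \cdots G_d''(i_d)\bigr) = T''(i_1,\ldots,i_d)$, whose sum is the desired $T(i_1,\ldots,i_d)$; the case $r_0' \leq r_0''$ is handled identically after swapping the roles of the primed and double-primed quantities. The main obstacle is purely bookkeeping: tracking the row and column partitions of the two end cores carefully so that the cross-terms in the block multiplication really do vanish. No idea beyond block matrix multiplication and cyclic invariance of the trace is required.
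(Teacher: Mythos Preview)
Your proposal is correct and follows essentially the same approach as the paper: a direct block-matrix computation of $\prod_{k=1}^d G_k(i_k)$ followed by taking the trace. The only cosmetic difference is the grouping---you isolate the block-diagonal middle $\prod_{k=2}^{d-1}G_k(i_k)$ first and then sandwich it between the two end cores, whereas the paper multiplies $G_1$ through $G_{d-1}$ together and then partitions that product into a $2\times 2$ block before hitting it with $G_d$; both routes amount to the same bookkeeping and the same identification of the cross-terms as zero.
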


\begin{proof}
We consider only the case $r_0' \geq r_0''$; the remaining case is treated similarly. We have
\begin{align}
\prod_{k=1}^d G_k(i_k) 
= &\left[\begin{array}{c|c}
        &           \prod_{k=1}^{d-1}G_k''(i_k) \\
      \cline{2-2} \raisebox{.6\normalbaselineskip}{$  \prod_{k=1}^{d-1}G_k'(i_k) $}  & 0_{ \left(r_0' - r_0''\right)\times r_{d-1}''}\\
    \end{array}\right] 
      \left[\begin{array}{c|c}
      \multicolumn{2}{c}{\smash{\raisebox{0\normalbaselineskip}{$G_d'(i_d)$}}} \\ \hline 
		G_d''(i_d)  & 0_{r_{d-1}''\times \left(r_0' - r_0''\right)}
    \end{array}\right].
\end{align}
If we write the left and right matrices in the product as  $\bigl[ \begin{smallmatrix}A & C   \\ 
B & 0 \end{smallmatrix}\bigr],
\bigl[ \begin{smallmatrix} A_1 & B_1   \\ 
C_1 & 0
   \end{smallmatrix}\bigr],$ respectively, then it follows that 
      \begin{equation}
      \begin{split}
T(i_1, \ldots , i_d)
&= \text{Trace} \left(\begin{bmatrix} A & C   \\ 
B & 0
   \end{bmatrix} \begin{bmatrix} A_1 & B_1   \\ 
C_1 & 0
   \end{bmatrix} \right) = \text{Trace} \begin{bmatrix} AA_1 + CC_1 &A B_1   \\ 
BA_1 & BB_1
   \end{bmatrix} \\
   &= \text{Trace} \begin{bmatrix} AA_1 & AB_1   \\ 
BA_1 & BB_1
   \end{bmatrix} +  \text{Trace}\left( CC_1 \right) = T'(i_1, \ldots , i_d) + T''(i_1, \ldots , i_d).
   \end{split}
\end{equation}
\end{proof}

\begin{remark}\label{rem:add_choice}
Another valid choice of added cores would be to stack the cores $G'_1(i_1), G''_1(i_1)$ and the cores $G'_d(i_d), G''_d(i_d)$ as 
\begin{align}\label{eq:alt_stack_add}
\left[\begin{array}{c|c}
      \multicolumn{2}{c}{\smash{\raisebox{0\normalbaselineskip}{$G_1'(i_1)$}}} \\ \hline 
		G_1''(i_1)  & 0_{r_{0}''\times \left(r_1' - r_1''\right)}
    \end{array}\right], \qquad
\left[\begin{array}{c|c}
        &           G_d''(i_d)   \\ 
      \cline{2-2} \raisebox{.6\normalbaselineskip}{$G_d'(i_d)$}  & 0_{ \left(r_{d-1}' - r_{d-1}''\right)\times r_0''}\\
    \end{array}\right],
\end{align}
respectively, which is proved by a similar calculation. By permutation invariance, any choice of adjacent indices where the cores of one index is stacked as in the left matrix in Eq.~\eqref{eq:alt_stack_add} and the cores of the other as in the right matrix in Eq.~\eqref{eq:alt_stack_add} then defines cores of an added tensor $T = T' + T''$. In the remainder, we fix the choice in Thm.~\ref{thm:add} for the sake of definiteness.
\end{remark}

\begin{remark}\label{rem:end_cores}
Note that, unlike in previous work \cite{khoromskij2012tensors,zhao2016tensor}, we treat the end cores in a special fashion. This is required in order for the rounding procedure to reduce the size of the formally added tensor $T$ in Thm.~\ref{thm:add}. In the previous work \cite{khoromskij2012tensors,zhao2016tensor}, the formally added tensor $T$ is chosen to have all cores defined by Eq.~\eqref{eq:addition}, including $k=1, d$. In view of Prop.~\ref{prop:negrounding} below, a call of TR-rounding on $T$ would in this case have no effect at all.

\end{remark}
\begin{proposition}\label{prop:negrounding}
Let $T'$ and $T''$ be tensors with cores $G_k'$ and $G_k''$, respectively and define the tensor $T$ by its cores $G_k(i_k) = \bigl[ \begin{smallmatrix}
G_k' (i_k)& 0 \\ 0 &  G_{k}''(i_k)
\end{smallmatrix}\bigr],
$ for $k = 1, \ldots , d$ (including $k=1$ and $k=d$). If the TR-ranks of the representations of $T'$ and $T''$ are minimal, then calling Alg.~\ref{alg:TR-truncation} on $T$ with accuracy $\varepsilon = 0$ returns cores with same size as the input cores $G_k$.
\end{proposition}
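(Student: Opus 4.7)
The plan is to show two things: (i) the block-diagonal structure of the input cores is preserved through every operation in Alg.~\ref{alg:TR-truncation}, and (ii) minimality of $r'$ and $r''$ forces every rank update to be trivial.

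First I would verify the structural claim step by step. At each iteration of the structured QR-sweep, the reshape of the block-diagonal core $G_k(i_k) = \text{diag}(G_k'(i_k), G_k''(i_k))$ to a matrix of size $[r_{k-1}n_k, r_k]$ is, after a row permutation $P_k$ grouping primed row indices together, block-diagonal with blocks $M_k' \in \mathbb{R}^{r_{k-1}'n_k \times r_k'}$ and $M_k'' \in \mathbb{R}^{r_{k-1}''n_k \times r_k''}$ coming from the reshapes of $G_k'$ and $G_k''$. Its QR factorization may be chosen so that $Q_k = P_k^T \text{diag}(Q_k', Q_k'')$ and $R_k = \text{diag}(R_k', R_k'')$, so reshaping $Q_k$ back into a tensor restores the block-diagonal layout, and the update $G_{k+1} \leftarrow G_{k+1} \times_1 R_k$ preserves it since both factors are block-diagonal in the matching index. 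Inductively, after the full sweep each core remains block-diagonal, and each block coincides exactly with the core that running the sweep on $T'$ (resp.\ $T''$) in isolation would produce. The cyclic QR/SVD and the backward SVD-sweep inherit this property by the same argument applied to their respective reshapes, SVDs, and mode products.

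Next I would use minimality in a contrapositive form to rule out any rank reduction. In the cyclic step, $R_d$ is block-diagonal with blocks $R_d'$ and $R_d''$, so $\widetilde{r}_d = \text{rank}(R_d') + \text{rank}(R_d'')$. If $\text{rank}(R_d') < r_d'$ held, then by the structural claim applying Alg.~\ref{alg:TR-truncation} to $T'$ alone with $\varepsilon = 0$ (whose intermediate primed cores agree with the primed blocks produced on $T$) would yield a TR-representation of $T'$ with $r_0' = r_d'$ strictly reduced and all other ranks unchanged, which is strictly below $r'$ in the elementwise order and contradicts minimality of $r'$. Hence $\text{rank}(R_d') = r_d'$, symmetrically $\text{rank}(R_d'') = r_d''$, and $\widetilde{r}_d = r_d$, so the cyclic update does not trigger. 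The same contrapositive works in each step of the backward SVD-sweep: the reshape of $G_k$ to size $[r_{k-1}, n_k r_k]$ is block-diagonal after column permutation with rank $\text{rank}(M_k') + \text{rank}(M_k'')$, and a strict inequality for either block would give a TR-representation of $T'$ or $T''$ with rank vector strictly below $r'$ or $r''$ componentwise, again a contradiction. Hence $\widetilde{r}_{k-1} = r_{k-1}$ for all $k$, and the returned cores $\widetilde{G}_k$ have the same dimensions as the input $G_k$.

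The main obstacle I anticipate is the bookkeeping required to certify that block-diagonal structure genuinely persists through every reshape, QR, SVD, and mode product in Alg.~\ref{alg:TR-truncation}, since the reshapes temporarily obscure the block pattern and one must track it via explicit row/column permutations; once that equivariance is in place, the minimality argument collapses to a one-line contrapositive.
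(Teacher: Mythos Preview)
Your approach is the same as the paper's: track the block-diagonal structure through every step of Alg.~\ref{alg:TR-truncation} and use minimality of $r'$ and $r''$ to conclude that no rank reductions occur. There is, however, a circularity in the way you split the argument into (i) and (ii). In (i) you write that the QR factorization ``may be chosen'' to be block-diagonal, but the algorithm does not choose a QR; it computes the reduced QR of the reshaped core, and you must argue that \emph{this} factorization has the block form. That requires uniqueness of the thin QR, which in turn requires the reshaped matrix to have full column rank---and establishing full column rank of each of the two blocks is precisely where minimality enters. The paper makes this dependence explicit: it first shows, by the same contrapositive you use later, that the stacked primed and double-primed blocks each have full column rank (otherwise a factorization of one block would yield a strictly smaller TR-representation of $T'$ or $T''$), and only then invokes uniqueness of the reduced QR to conclude that the computed $Q$ and $R$ inherit the sparsity pattern.

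So your two parts cannot be decoupled: minimality must be fed into the QR sweep inductively, at each step, to certify full column rank and hence uniqueness before you can assert that the block structure survives. Once you reorganize the argument this way, your contrapositive for the cyclic step and the backward SVD-sweep goes through exactly as you wrote it, and matches the paper's ``by a similar argument'' for that portion.
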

\begin{proof}

The QR-sweep of Alg.~\ref{alg:TR-truncation} first performs a QR-decomposition of the matrix
\begin{equation}\label{eq:sparsitypattern}
A = \begin{bmatrix}
G_1' (1)^T& 0 & G_1'(2)^T & 0 & \ldots & 0\\ 0 &  G_{1}''(1)^T & 0 & G_{1}''(2)^T & \ldots & G_{1}''(n_1)^T
\end{bmatrix}^T.
\end{equation}
The matrices $A' = \bigl[ \begin{smallmatrix}
G_1' (1)^T& G_1'(2)^T &  \ldots & G_1'(n_1)^T
\end{smallmatrix} \bigr]^T$, and $A'' = \bigl[ \begin{smallmatrix}
G_1''(1)^T& G_1''(2)^T &  \ldots & G_1''(n_1)^T
\end{smallmatrix}\bigr]^T$ both have full column rank. To see this, assume that e.g., $A'$ could be factored as $UV$, for $U \in \mathbb{R}^{r_0n_1 \times r}$, $V\in \mathbb{R}^{r\times r_1}$, and some $r < r_1$. Then $\text{reshape}(U, [r_0, n_1, r])$, $G_2' \times_1 V, G_3', \ldots , G_d'$ would be cores of a representation of $T'$ with TR-rank vector strictly smaller than the one of $G_1', \ldots , G_d'$, which contradicts the minimality assumption.

It then follows that also $A$ has full column rank, so it has a unique reduced QR-factorization up to signs. When this is computed using the Gram-Schmidt procedure, it is clear that the decomposition $A = QR$ has the sparsity pattern
\begin{equation}
Q = \begin{bmatrix} Q_1'^T &0 & Q_2'^T &\ldots &Q_{n_1}'^T & 0 \\ 0 & Q_1''^T &0&\ldots &0& Q_{n_1}''^T \end{bmatrix}^T,
\end{equation}
and $R = \bigl[ \begin{smallmatrix} R' &0  \\
 0 & R'' \end{smallmatrix} \bigr]$. In the QR-sweep, the $1$-mode contraction of $R$ with the next tensor in the sweep, i.e., the matrix multiplication of $R$ with the unfolded matrix
 \begin{equation}
\begin{bmatrix}
G_2' (1)& 0 & G_2'(2) & 0 & \ldots & 0\\ 0 &  G_{2}''(1) & 0 & G_{2}''(2) & \ldots & G_{2}''(n_2)
\end{bmatrix}
 \end{equation}
therefore preserves the sparsity pattern of Eq.~\eqref{eq:sparsitypattern}, which continues until the last core. By a similar argument, the SVD-sweep also respects the sparsity pattern of Eq.~\eqref{eq:sparsitypattern}. This means that all matrices for which we calculate the SVD-decomposition have full rank and no ranks are reduced in the SVD-sweep. The conclusion then follows.
\end{proof}

The size of the formally added tensor $T$ in Prop.~\ref{prop:negrounding} is therefore not reduced by rounding, and storage costs in practical computations quickly explode. On the contrary, defining the addition operation as in Thm.~\ref{thm:add} leads to sizeable storage-cost reductions.

As a check, it is easy to see that an application of the rounding procedure on the tensor $T+T$ gives a result with same ranks as $T$, assuming these ranks are minimal. To see this, the first step of the QR-sweep is to perform a reduced QR-decomposition of the matrix
\begin{equation}
\begin{bmatrix}
G_1(1)^T&  G_1(2)^T & \ldots & G_{1}(n_1)^T\\ 
G_1(1)^T&  G_1(2)^T & \ldots & G_{1}(n_1)^T
\end{bmatrix}^T
= Q_1 \bigl[ R_1 R_1' \bigr],
\end{equation} 
where $Q_1$ has orthogonal columns and clearly $Q_1R_1 = Q_1R_1'$. Now, by minimality of the ranks of $T$, the matrix $\bigl[ \begin{smallmatrix} G_1(1)^T&  G_1(2)^T & \ldots & G_{1}(n_1)^T\end{smallmatrix}\bigr]^T$ has full column rank, and therefore a QR-decomposition unique up to signs. Since $Q_1R_1 = Q_1R_1'$, and both $Q_1R_1$ and $Q_1R_1'$ are QR-decompositions of this matrix, it follows that actually $R_1 = R_1'$. In the next step of the rounding procedure, the matrix $\bigl[ R_1 R_1 \bigr]$ is multiplied with 
 \begin{equation}
\begin{bmatrix}
G_2(1)& 0 & G_2(2) & 0 & \ldots & 0\\ 0 &  G_{2}(1) & 0 & G_{2}(2) & \ldots & G_{2}(n_2)
\end{bmatrix},
 \end{equation}
 which gives $\bigl[ R_1G_2(1),  R_1G_2(1) , \ldots , R_1 G_2(n_2), R_1 G_2(n_2)\bigr]$. Iterating this procedure shows that each $Q_k$ has dimensions $r_{k-1}n_k\times r_k$, for $k= 1, \ldots d$. The matrix $V_d$ is then of dimension $r_d \times r_d$ by minimality of the ranks of $r_k$, so no cyclic step is performed. By minimality, no ranks are reduced in the SVD-sweep, so the result has ranks $r_0, \ldots , r_{d-1}, r_d$.

After a draft of this manuscript appeared, Batselier \cite{batselier2018trouble} made the observation that rounding in the TR-format after applying the Hadamard product of a tensor to itself might not always reduce the resulting ranks to the original ranks. This means that different variations of Alg.~\ref{alg:TR-truncation} might need to be used for different applications. Devising and comparing these different variations remains an important open problem.

\subsection{Frobenius norm}\label{sec:frob}
In this section, we present an algorithm to compute the Frobenius norm of a tensor $T$, which improves on the cost of the algorithms previously recorded in the literature. In a previous approach \cite{zhao2016tensor}, it is suggested to form the Hadamard product $T' = T\circ T$ after which $T'$ can be contracted with a tensor of all ones. This leads to a total complexity $\mathcal{O}\left(dnr^4 + dr^6\right)$. However, the explicit formation of the large tensor $T'$ can be avoided, and the resulting Kronecker structure can be used together with permutation invariance to lower the total complexity to $\mathcal{O}\left(dnr^3 \min_k r_k\right)$.
\begin{algorithm}
\caption{Frobenius norm}\label{alg:TR-norm}
\begin{algorithmic}[1]

 \Require{$d$-tensor $T$ with cores $G_1, \ldots , G_d$.}{}
 \Ensure{$ \|T \|_F$}{} \\
 Compute $k_* = \argmin_k r_k$ and set $\tau_{k_*} = (1, d, d-1, \ldots , 2)^{k_*-1}$, $G_k'(i_k) = G_{\tau_{k_*}^{-1}(k)}(i_{\tau_{k_*}^{-1}(k)})$.\\
 $A := \sum_{i_1} G'_1(i_1)\otimes_K G'_1(i_1)$
 \For{$k = 2:d$} 
 \State $X_{\leq k}(i_k) := A\left( G'_k(i_k)\otimes_K G'_k(i_k)\right) $ using the Kronecker structure to reduce cost
 \State $A =\sum_{i_k} X_{\leq k}(i_k)$ 
 \EndFor \\
$\|T\|_F = \text{Trace}\left(A\right)$
 \end{algorithmic}
\end{algorithm}

To describe the algorithm, note that $\|T\|_F = \text{Trace}\left( X_1 \ldots X_d\right)$, where $X_k = \sum_{i_k} G_k(i_k)\otimes_K G_k(i_k)$. When computing the products $X_{\leq k} := X_1 \ldots X_k$, we can proceed from left to right and use the Kronecker structure to achieve cost $\mathcal{O}\left(knr_0r^3\right)$, so the total cost of computing $\|T\|_F = \text{Trace}\left( X_1 \ldots X_d\right)$ is $\mathcal{O}\left(dnr_0r^3\right)$. The large matrix $G_k(i_k)\otimes_K G_k(i_k)$ never has to be explicitly formed during the calculations, except for the initial matrix $G_1(i_1)\otimes_K G_1(i_1)$. Note also that $\|T\|_F$ is invariant under cyclic shifts, so we can choose the shift minimizing $r_0$, giving total cost $\mathcal{O}\left(dnr^3\min_kr_k\right)$. This is summarized in Alg.~\ref{alg:TR-norm}.

\section{Graph-based formats}\label{sec:graph}
As noted in the literature \cite{ye2018tensor}, the representation ranks and therefore the storage cost of a tensor can depend strongly on the structure of the graph used for the tensor format. It is therefore desirable to generalize the algorithms from the previous sections to general graph formats. These questions are discussed below and lead to an algorithm to, given a tensor, select a low cost graph representation, and to an alternative conversion from canonical format into TR-format, producing higher compression ratios.

\subsection{Rounding}
For a general graph, we can perform a TR-rounding procedure on each cycle in the graph and alternate between different cycles to reduce overall ranks in a procedure we now describe. Let $\mathcal{G}$ be a graph containing a cycle $C$. The graph $\mathcal{G} \smallsetminus C$ can be written in terms of its connected components $\mathcal{C}_k, k=1, \ldots , M$ as $\mathcal{G} \smallsetminus C = \bigcup_{k=1}^M \mathcal{C}_k$. The cores of each connected component determine a tensor $T_{\mathcal{C}_k}(i_{\mathcal{C}_k}, \bigtimes_{(v,w)\in e_{C \mathcal{C}_k}}\alpha_{vw}),$ where $e_{C \mathcal{C}_k}$ denotes the edges between $C$ and $\mathcal{C}_K$ and $i_{\mathcal{C}_k}$ the indices of the vertices within $\mathcal{C}_k$. Likewise, we will use $i_C$ to denote the indices of the vertices within $C$. See Fig.~\ref{fig:graph_notation} for an illustration.

\begin{figure}
\begin{center}
\begin{tikzpicture}[trim right=6cm]
    \SetVertexNormal[FillColor = gray!10]
   \Vertex[x=0 ,y=0, L=$1$]{1}
   \Vertex[x=2 ,y=0,L=$2$]{2}
   \Vertex[x=4,y=0,L=$3$]{3}
   \Vertex[x=6 ,y=0,L=$4$]{4}
   \Vertex[x=0 ,y=2,L=$5$]{5}
   \Vertex[x=2 ,y=2,L=$6$]{6}
   \Vertex[x=4 ,y=2,L=$7$]{7}
   \Vertex[x=6 ,y=2,L=$8$]{8}
   \Edge(1)(2)
   \Edge(3)(2)
      \Edge(2)(6)
      \Edge(3)(7)
      \Edge(3)(4)
      \Edge(4)(8)
      \Edge(5)(6)
      \Edge(6)(7)
      \Edge(8)(7)
              \node[draw=orange,fit=(6) (2) (3) (7) ,inner sep=2ex,thick,rounded corners,rectangle] (tmp) {};
              \node[draw=orange,fit=(1),inner sep=2ex,thick,rounded corners,rectangle] (tmp) {};
              \node[draw=orange,fit=(5),inner sep=2ex,thick,rounded corners,rectangle] (tmp) {};
              \node[draw=orange,fit=(4) (8) ,inner sep=2ex,thick,rounded corners,rectangle] (tmp) {};

\node[text width=3cm] at (4.4,-1) 
    {$C$};
    \node[text width=3cm] at (1.4,1) 
    {$\mathcal{C}_1$};
\node[text width=3cm] at (1.4,-1) 
    {$\mathcal{C}_2$};
    
    \node[text width=3cm] at (7.4,-1) 
    {$\mathcal{C}_3$};
    
\end{tikzpicture}
\caption{Illustration of the notation. This graph and cycle $C$ have $i_C = \{i_2, i_3, i_6, i_7\}, i_{\mathcal{C}_1} = i_5$, $i_{\mathcal{C}_2} = i_1$, $i_{\mathcal{C}_3} = \{i_4, i_8\}$, $e_{C\mathcal{C}_1} = (5,6)$, $e_{C\mathcal{C}_2} = (1,2)$ and $e_{C\mathcal{C}_3} = \{(3,4), (7,8)\}$.}\label{fig:graph_notation}

\end{center}
\end{figure}
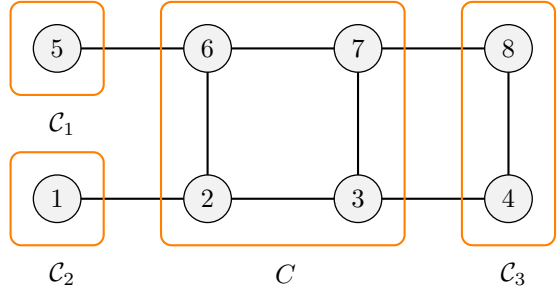

The indices $i_v$ of vertices $v$ within $\mathcal{C}_k$ incident to a vertex $w$ in $C$ can be paired up with the edge $(v,w)$, resulting in the core $T_{\mathcal{C}_k}^e(i^e_v) \in \mathbb{R}^{\bigtimes_{v\in\mathcal{C}_k} \left( \prod_{(v,w) \in e_{C \mathcal{C}_k}}n_vr_{vw}\right)},$ where for each $v \in \mathcal{C}_k$, we define $i^e_v = i_v$ for $v$ not incident to $C$, and  $i^e_v = i_v\prod_{(v,w) \in e_{C \mathcal{C}_k}}\alpha_{vw},$ written as one long index for all pairs $(v,w)$ in $e_{C \mathcal{C}_k}$. Similarly, the indices in the cycle $C$ can be paired up with the edges leading out of $C$ to define a tensor
\begin{equation}\label{eq:T_C^e}
T_{C}^e(i^e_v) \in \mathbb{R}^{\bigtimes_{v\in C} \left( \prod_{k, (v,w) \in e_{C \mathcal{C}_k}}n_vr_{vw}\right)},
\end{equation}
where for all $v \in C$, we define $i^e_v = i_v$ for $v$ not incident to any $\mathcal{C}_{k}$, and $i^e_v = i_v\prod_{k, (v,w) \in e_{C \mathcal{C}_k}}\alpha_{vw},$ written as one long index for $(v,w)$ in $e_{C \mathcal{C}_k}$. The TR-rounding procedure can be applied to $T_{C}^e(i^e_v) $, giving a resulting tensor $\widetilde{T}_C^e$, while leaving the remaining cores unaltered. The result can be reshaped back into the $\mathcal{G}$-format and we denote the result by $\widetilde{T}$. The rounding error propagates to the whole tensor as in the following, which is a straightforward generalization of a result by Handschuh \cite[Sec.~6]{handschuh2012changing}.
\begin{theorem}\label{thm:error_prop}
$\| T - \widetilde{T} \|_F \leq \| T_C^e - \widetilde{T}_C^e \|_F \prod_{j=1}^M \| T_{\mathcal{C}_j}^e \|_F.$
\end{theorem}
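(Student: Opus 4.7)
My plan is to reduce the statement to a single Cauchy--Schwarz estimate after writing $T$ and $\widetilde{T}$ in a common contracted form. After internal summations within $C$ and within each component $\mathcal{C}_k$ have been absorbed into $T_C^e$ and the $T_{\mathcal{C}_k}^e$, the only remaining contractions in the $\mathcal{G}$-representation of $T$ are over the bridge-edge indices $\alpha_{vw}$ with $(v,w) \in e_{C\mathcal{C}_k}$. Crucially, these indices partition into disjoint groups $\beta_k := (\alpha_{vw})_{(v,w)\in e_{C\mathcal{C}_k}}$, one per connected component, because distinct components share no bridge edges with $C$. This yields the identity
\begin{equation*}
T(i_1, \ldots, i_d) = \sum_{\beta_1, \ldots, \beta_M} T_C^e(i_C, \beta_1, \ldots, \beta_M) \prod_{k=1}^M T_{\mathcal{C}_k}^e(i_{\mathcal{C}_k}, \beta_k),
\end{equation*}
and the same identity for $\widetilde{T}$ with $\widetilde{T}_C^e$ replacing $T_C^e$, since by construction the rounding procedure leaves all cores outside $C$ unchanged. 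Subtracting, the error $T - \widetilde{T}$ is obtained by the same contraction with $\Delta := T_C^e - \widetilde{T}_C^e$ in place of $T_C^e$.

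Fixing $(i_1, \ldots, i_d)$, I would then apply Cauchy--Schwarz in the single combined variable $\vec{\beta} = (\beta_1, \ldots, \beta_M)$ to obtain
\begin{equation*}
\bigl| (T - \widetilde{T})(i_1, \ldots, i_d) \bigr|^2 \leq \Bigl( \sum_{\vec{\beta}} \bigl| \Delta(i_C, \vec{\beta}) \bigr|^2 \Bigr) \Bigl( \sum_{\vec{\beta}} \prod_{k=1}^M \bigl| T_{\mathcal{C}_k}^e(i_{\mathcal{C}_k}, \beta_k) \bigr|^2 \Bigr).
\end{equation*}
The second factor factorizes as $\prod_{k=1}^M \sum_{\beta_k} | T_{\mathcal{C}_k}^e(i_{\mathcal{C}_k}, \beta_k) |^2$ because the $\beta_k$'s range over independent sets. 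Summing next over $(i_1, \ldots, i_d)$ and using that $i_C$ and the $i_{\mathcal{C}_k}$'s form a disjoint partition of the external indices $i_1, \ldots, i_d$, the resulting multi-sum factorizes into $\|\Delta\|_F^2 \prod_{k=1}^M \|T_{\mathcal{C}_k}^e\|_F^2$; taking square roots then gives the claimed bound.

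The only delicate point, and the one worth double-checking, is the combinatorial bookkeeping of the composite indices $i_v^e$: one must verify that, once the vertex indices $i_v$ and the bridge-edge indices $\alpha_{vw}$ have been amalgamated into $i_v^e$ in each of $T_C^e$ and the $T_{\mathcal{C}_k}^e$, gluing these factors back together to recover $T$ indeed amounts to summing only over the bridge-edges $\alpha_{vw}$, and that these bridge-edge variables split cleanly into the groups $\beta_k$ along the connected components of $\mathcal{G} \setminus C$. Once this structural observation is in hand, the inequality reduces to Cauchy--Schwarz followed by Fubini, with no further analytic input required.
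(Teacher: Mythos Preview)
Your proof is correct and follows essentially the same approach as the paper: both rest on the structural observation that the bridge-edge indices split by component and the external indices partition as $i_C \sqcup i_{\mathcal{C}_1} \sqcup \cdots \sqcup i_{\mathcal{C}_M}$, followed by Cauchy--Schwarz. The only cosmetic difference is ordering: the paper first applies the triangle inequality and the crossnorm property over the external indices and then an iterated Cauchy--Schwarz in the bridge variables, whereas you apply a single Cauchy--Schwarz in the combined bridge variable $\vec{\beta}$ pointwise and then sum over the external indices---a slightly more streamlined version of the same argument.
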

\begin{proof}
The triangle inequality and the fact that the Frobenius norm is a crossnorm imply
\begin{equation}
\| T - \widetilde{T} \|_F = \| \sum_{\substack{k,\alpha_{vw},\\ (v,w) \in e_{C\mathcal{C}_k}} }  \left(T_C^e(i_C^e) - \widetilde{T}_C^e(i_C^e)\right)\otimes \bigotimes_{j=1}^M T_{\mathcal{C}_j}^e(i^e_v)\|_F 
\leq  \sum_{\substack{k,\alpha_{vw},\\ (v,w) \in e_{C\mathcal{C}_k}} }   \| T_C^e(i_C^e) - \widetilde{T}_C^e(i_C^e)\|_F \prod_{j=1}^M \|T_{\mathcal{C}_j}^e(i^e_v)\|_F.
\end{equation}

Next, we use
\begin{equation}
\sum_{\beta_1 , \ldots , \beta_M} a(\beta_1, \ldots ,\beta_M) \prod_{j=1}^M b_j(\beta_j) \leq \left(\sum_{\beta_1 , \ldots , \beta_M} a(\beta_1, \ldots ,\beta_M)^2 \right)^{\frac{1}{2}} \prod_{j=1}^M\left(\sum_{\beta_j} b_j(\beta_j)^2\right)^{\frac{1}{2}},
\end{equation}
which follows by induction on $M$ from Cauchy-Schwarz, to conclude that
\begin{equation}
\| T - \widetilde{T} \|_F^2 \leq  \left( \sum_{\substack{k,\alpha_{vw},\\ (v,w) \in e_{C\mathcal{C}_k}} }  \| T_C^e(i_C^e) - \widetilde{T}_C^e(i_C^e)\|_F^2 \right) \left(\prod_{j=1}^M  \sum_{\substack{\alpha_{vw},\\ (v,w) \in e_{C\mathcal{C}_j}} }\|T_{\mathcal{C}_j}^e(i^e_v)\|_F^2  \right) = \| T_C^e - \widetilde{T}_C^e \|_F^2 \prod_{j=1}^M \| T_{\mathcal{C}_j}^e \|_F^2.
\end{equation}

\end{proof}
Because of the upper bound in Thm.~\ref{thm:error_prop}, we can achieve a rounding error $\| T - \widetilde{T} \|_F \leq \varepsilon \|T\|_F$ by performing the rounding of the cycle $C$ with rounding error 
\begin{equation}\label{eq:graph_accuracy}
 \| T_C^e - \widetilde{T}_C^e \|_F  \leq \varepsilon  \frac{\|T\|_F}{\prod_{j=1}^M \| T_{\mathcal{C}_j}^e \|_F}.
\end{equation}
Note that the second factor on the right hand side is bounded from above by $\| T_C^e\|_F$, which can be seen by an application of Thm.~\ref{thm:error_prop} with $\widetilde{T}_C^e = 0$.

We summarize the above considerations in Alg.~\ref{alg:graph_truncation}.

\begin{algorithm}
\caption{$\mathcal{G}$-truncation}\label{alg:graph_truncation}
\begin{algorithmic}[1]
 \Require{$d$-tensor $T$ with cores $G_1, \ldots , G_d$, $\mathcal{G}$-ranks $r_{ik}$, accuracy $\varepsilon$.}{}
 \Ensure{$d$-tensor $\widetilde{T}$ with cores $\widetilde{G}_k$, $\mathcal{G}$-ranks $\widetilde{r}_{ik} \leq r_{ik}$ and $\| T - \widetilde{T} \|_F \leq \varepsilon\|T\|_F$.}{} \\
 Write $\mathcal{G}$ as a union of cycles and paths.
 \For{each cycle or path $C$}
 \State Call Alg.~\ref{alg:TR-truncation} on $T_C^e$ from Eq.~\eqref{eq:T_C^e} with accuracy from Eq.~\eqref{eq:graph_accuracy}.
 \EndFor
 \end{algorithmic}
\end{algorithm}

\subsection{Transforming the graph structure}\label{sec:transforming_graph}
This section details an inexpensive way of transforming a tensor $T$ given in a $\mathcal{G}$-graph based format into a $\mathcal{G}'$-graph based format, for any two graphs $\mathcal{G}$ and $\mathcal{G}'$. 

The graph $\mathcal{G}$ can be transformed into the graph $\mathcal{G}'$ by successively inserting and deleting edges. We now describe how to capture these successive transformations on the level of the tensor $T$. For simplicity of the exposition, we describe the procedure of deleting an edge in TR-format (Alg.~\ref{alg:delete_edge}) and inserting an edge into a TT-format (Alg.~\ref{alg:insert_edge}).

\subsubsection{Edge deletion}
\begin{algorithm}
\caption{Edge deletion}\label{alg:delete_edge}
\begin{algorithmic}[1]
 \Require{TR-representation $G_1, \ldots , G_d$ of $d$-tensor $T$, edge $k$ to be deleted.}{}
 \Ensure{TR-representation of $T$ with $\widetilde{r}_{k-1} = 1$.}{} \\
Take $\tau_k = ( 1, d, d-1, \ldots, 2  )^{k-1}$
 \For{$\ell = 1:r_k$} 
 \State define $T_{\ell}$ by $T_{\ell}(i_1, \ldots , i_d) = \sum\limits_{\substack{\alpha_0, \ldots , \alpha_{d-1} \\ \alpha_d = \alpha_0}} \prod_{j = 1}^d G_{j}(\alpha_{j- 1}, i_{j}, \alpha_{j})\delta_{\alpha_k, _{\ell}}$
 \EndFor \\
 $T = \sum_{\ell = 1}^{r_k} T_{\ell}^{\tau_k}$ \\
 $ T = T^{\tau_k^{-1}}$
 \end{algorithmic}
\end{algorithm}

To describe the procedure for deleting an edge from a TR-representation, note that having $r_{k-1} = 1$ is equivalent to a TR-representation with the edge between vertices $i_{k-1}$ and $i_{k}$ deleted. By fixing all cores except for the $k$:th one, the tensor can be written as a sum of $r_{k-1}$ distinct tensors in TR-format, where each one has $k$:th core tensor $G_k(\ell, i_k, \cdot)$, for $1 \leq \ell \leq r_{k-1}$. We demonstrate in Thm.~\ref{thm:correct_delete} that this addition can be performed on the level of the cores in such a way that the result has $k-1$:th rank equal to $1$ and the full procedure is written out in Alg.~\ref{alg:delete_edge}.

\begin{theorem}\label{thm:correct_delete}
If $\widetilde{T}$ denotes the output of Alg.~\ref{alg:delete_edge} when called with input $T$, then $\widetilde{T}$ has $k-1$:th TR-rank equal to $1$ and $\widetilde{T}(i_1, \ldots , i_d) = T(i_1, \ldots , i_d)$ for all $i_1 , \ldots , i_d$.
\end{theorem}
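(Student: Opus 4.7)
The plan splits into verifying the tensor equality $\widetilde T(i_1,\ldots,i_d) = T(i_1,\ldots,i_d)$ and the rank claim $\widetilde r_{k-1} = 1$.

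The tensor equality is direct: summing the Kronecker delta over $\ell$ recovers the full unrestricted TR-sum for $T$, so $\sum_\ell T_\ell = T$; composing the cyclic shift $\tau_k$ with its inverse $\tau_k^{-1}$ leaves the tensor unchanged, yielding $\widetilde T = (T^{\tau_k})^{\tau_k^{-1}} = T$ pointwise.

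For the rank claim, my strategy is to show that the cyclic shift $\tau_k$ is chosen precisely so that the bond restricted in the definition of $T_\ell$ becomes the end-bond of the shifted TR-representation $T_\ell^{\tau_k}$. First, restricting the targeted internal bond to a single value $\ell$ reduces the two adjacent cores to slices of shapes $r_{k-2}\times n_{k-1}\times 1$ and $1\times n_k\times r_k$, making that bond rank one inside $T_\ell$. Next, using cyclic invariance of the trace together with the explicit formula $\tau_k = \gamma^{k-1}$, the cores of $T_\ell^{\tau_k}$ in natural order can be read off as $G_k, G_{k+1}, \ldots, G_d, G_1, \ldots, G_{k-1}$, and the two restricted slices land at positions $1$ and $d$. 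Consequently, the rank-one bond coincides with the end-bond of $T_\ell^{\tau_k}$; this alignment is the whole point of introducing $\tau_k$.

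To finish, I would invoke Theorem~\ref{thm:add} iteratively to assemble $\sum_\ell T_\ell^{\tau_k}$ as a TR-representation. The block stacking prescribed for the end cores in that theorem ensures that the end-bond dimension of the sum equals the maximum end-bond of the summands, which is one; the interior bonds may inflate but that is immaterial for the stated claim. Applying $\tau_k^{-1}$ is then a relabeling of core positions that preserves the cyclic bond structure, so the end-bond of $T^{\tau_k}$ becomes the bond between $G_{k-1}$ and $G_k$ in $T$, namely $r_{k-1}$, and hence $\widetilde r_{k-1}=1$. The main obstacle is not a deep algebraic fact but careful bookkeeping under $\tau_k$: one must verify that the restricted bond really does land at the end-bond of $T_\ell^{\tau_k}$, because only then does the end-core stacking in Theorem~\ref{thm:add} deliver the rank-one output.
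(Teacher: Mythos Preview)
Your proposal is correct and follows essentially the same approach as the paper: identify the shifted cores of $T_\ell^{\tau_k}$ as $G_k(\ell,\cdot,\cdot),G_{k+1},\ldots,G_{k-1}(\cdot,\cdot,\ell)$ so that the restricted bond is the end-bond, then apply Theorem~\ref{thm:add} to conclude the summed end-rank stays at $\max_\ell 1 = 1$, and verify the entrywise equality by collapsing the Kronecker delta. Your write-up is somewhat more explicit about the cyclic-shift bookkeeping than the paper's terse proof, but the logical structure is identical.
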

\begin{proof}
With the notation from Alg.~\ref{alg:delete_edge}, each $T_{\ell}^{\tau_k}$ can be represented with the cores $G_k(\ell, \cdot, \cdot)$, $G_{k+1}(\cdot, \cdot, \cdot)$, $\ldots$, $G_{k-1}(\cdot, \cdot, \ell)$ for fixed $\ell$. Since the first core for each $\ell$ has size $1\times n_k \times r_k$, it follows from Thm.~\ref{thm:add} that $ \sum_{\ell = 1}^{r_k} T_{\ell}^{\tau_k}$ has first core of size $1\times \cdot \times \cdot$, from which the first claim of the theorem follows. For the second, we have
\begin{align}
\widetilde{T}(i_1, \ldots , i_d) &=  \sum_{\ell = 1}^{r_k} T_{\ell}(i_1, \ldots , i_d) =  \sum\limits_{\substack{\ell, \alpha_0, \ldots , \alpha_{d} \\ \alpha_d = \alpha_0}} \delta_{\alpha_k, _{\ell}}  \prod_{j = 1}^d G_{j}(\alpha_{j- 1}, i_{j}, \alpha_{j}) = T(i_1, \ldots , i_d).
\end{align}
\end{proof}

Alg.~\ref{alg:delete_edge} itself has negligible cost. It can (and should) however be combined with a call to the TR-rounding procedure, resulting in a total complexity of $\mathcal{O}(dnr^3r_{k-1}^3)$, since the cores of $\sum_{\ell = 1}^{r_k} T_{\ell}^{\tau_k}$ have ranks not greater than $rr_{k-1}$. Note that $\widetilde{r}_{k-1} = 1$ is unchanged by an application of TR-rounding.

\subsubsection{Edge insertion}
Next, we detail the procedure of inserting an edge between indices $i_1$ and $i_d$ in a TT-representation of $T$. This is equivalent to finding a TR-representation of $T$ with $r_0 \neq 1$ and is achieved with negligible cost when performed as in Alg.~\ref{alg:insert_edge} below. The algorithm consists of a series of operations reshaping each core tensor $G_k$ of $T$.

\begin{algorithm}
\caption{Edge insertion}\label{alg:insert_edge}
\begin{algorithmic}[1]
 \Require{TT-representation $G_1, \ldots , G_d$ of $d$-tensor $T$, divisor $\widetilde{r}_0$ of $r_1$.}{}
 \Ensure{TR-representation $\widetilde{G}_1, \ldots , \widetilde{G}_d$ of $T$ with TR-ranks $\widetilde{r}_k$.}{} 
 \State Write $G_1(i_1) = \bigl[\begin{smallmatrix}
        G_1^{(1)}(i_1) & \cdots &  G_1^{(\widetilde{r}_0)}(i_1)
    \end{smallmatrix}\bigr]$, and $
    G_2(i_2) = \bigl[\begin{smallmatrix}
        G_2^{(1)}(i_2)^T &
        \cdots &
          G_2^{(\widetilde{r}_0)}(i_2)^T
    \end{smallmatrix}\bigr]^T  ,$ where each $G_1^{(j)}(i_1) \in \mathbb{R}^{1\times \frac{r_1}{\widetilde{r}_0}}$ and $G_2^{(j)}(i_2) \in \mathbb{R}^{\frac{r_1}{\widetilde{r}_0} \times r_2}$
\State Form $\widetilde{G}_1(i_1) = \bigl[ \begin{smallmatrix}
        G_1^{(1)}(i_1)^T &
        \cdots & 
          G_1^{(\widetilde{r}_0)}(i_1)^T
    \end{smallmatrix}\bigr]^T  $  \Comment{Stack first core}
    
    \State Form $\widetilde{G}_2(i_2)  =  \bigl[\begin{smallmatrix}
        G_2^{(1)}(i_2) & \cdots &  G_2^{(\widetilde{r}_0)}(i_2)
    \end{smallmatrix}\bigr]$ \Comment{Stack second core}
  
\State Form $\widetilde{G}_k(i_k) = \text{blockdiag}\left(G_k(i_k)\right)$ for $k  = 3, \ldots , d.$  \Comment{Stack remaining cores}
\end{algorithmic}
\end{algorithm}

We now prove the correctness of Alg.~\ref{alg:insert_edge}.
\begin{theorem}
If $\widetilde{T}$ denotes the output of Alg.~\ref{alg:insert_edge} when called with input $T$, then $\widetilde{T}$ satisfies $\widetilde{T}(i_1, \ldots , i_d) = T(i_1, \ldots , i_d)$ for all $i_1 , \ldots , i_d$.
\end{theorem}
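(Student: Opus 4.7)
The plan is to unfold the definition of $\widetilde{T}$ as a trace of a product of the new cores and show, by propagating the block-diagonal structure through the cores, that the trace collapses to the ordinary matrix product that defines $T$ in TT-format. The single key algebraic identity is the column/row block decomposition
\begin{equation}
G_1(i_1)\, G_2(i_2) \;=\; \sum_{j=1}^{\widetilde{r}_0} G_1^{(j)}(i_1)\, G_2^{(j)}(i_2),
\end{equation}
which is just block matrix multiplication with the compatible splitting of the $r_1$-dimensional contracted index. My hope is that the new cores are rigged so that the outer summation index $j$ above becomes precisely the trace index of the TR-contraction, while a second, decoupled index $\ell$ carries the independent cycle variable and is forced by the block-diagonal cores to equal $j$ at the end.

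First I would verify the dimensions: $\widetilde{G}_1(i_1)\in\mathbb{R}^{\widetilde{r}_0\times(r_1/\widetilde{r}_0)}$, $\widetilde{G}_2(i_2)\in\mathbb{R}^{(r_1/\widetilde{r}_0)\times(\widetilde{r}_0 r_2)}$, and $\widetilde{G}_k(i_k)\in\mathbb{R}^{\widetilde{r}_0 r_{k-1}\times \widetilde{r}_0 r_k}$ for $k\geq 3$ (with $r_d=1$), so the product $\widetilde{G}_1(i_1)\cdots\widetilde{G}_d(i_d)$ is $\widetilde{r}_0\times\widetilde{r}_0$ and its trace is well defined. Then I would compute $\widetilde{G}_1(i_1)\widetilde{G}_2(i_2)$ directly: indexing the columns of this product by a pair $(\ell,m)\in\{1,\dots,\widetilde{r}_0\}\times\{1,\dots,r_2\}$, the $(j,(\ell,m))$ entry is exactly $\bigl[G_1^{(j)}(i_1)G_2^{(\ell)}(i_2)\bigr]_{1,m}$, because the contracted index runs over the $(r_1/\widetilde{r}_0)$-dimensional middle while $j$ selects the row block of $\widetilde{G}_1$ and $\ell$ selects the column block of $\widetilde{G}_2$, and these two selections are independent.

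Next, by a straightforward induction on $k=3,\dots,d-1$, the block-diagonal form $\widetilde{G}_k(i_k)=\mathrm{blockdiag}(G_k(i_k),\dots,G_k(i_k))$ with $\widetilde{r}_0$ blocks preserves this structure: the $(j,(\ell,m))$ entry of $\widetilde{G}_1(i_1)\cdots\widetilde{G}_k(i_k)$ equals $\bigl[G_1^{(j)}(i_1)G_2^{(\ell)}(i_2)G_3(i_3)\cdots G_k(i_k)\bigr]_{1,m}$, since the block-diagonal structure keeps the index $\ell$ fixed while only the inner $r_{k-1}\times r_k$ blocks act. Applying this to $k=d-1$ and then multiplying on the right by the block-diagonal $\widetilde{G}_d(i_d)\in\mathbb{R}^{\widetilde{r}_0 r_{d-1}\times\widetilde{r}_0}$ gives a $\widetilde{r}_0\times\widetilde{r}_0$ matrix whose $(j,\ell')$ entry is $G_1^{(j)}(i_1)G_2^{(\ell')}(i_2)G_3(i_3)\cdots G_d(i_d)$; the block-diagonal form of $\widetilde{G}_d$ enforces that the column block index $\ell'$ match the column block index $\ell$ of the previous product.

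Finally, taking the trace,
\begin{equation}
\widetilde{T}(i_1,\dots,i_d)=\sum_{j=1}^{\widetilde{r}_0} G_1^{(j)}(i_1)G_2^{(j)}(i_2)G_3(i_3)\cdots G_d(i_d) = G_1(i_1)G_2(i_2)G_3(i_3)\cdots G_d(i_d) = T(i_1,\dots,i_d),
\end{equation}
using the block identity in the first step. The main obstacle I anticipate is keeping the bookkeeping of the two roles of the index $\widetilde{r}_0$ clear (one as the trace index introduced by splitting $G_1$ and $G_2$, the other as the cycle index produced by the block-diagonal stacking of $G_3,\dots,G_d$) and showing they are forced to coincide by the block-diagonal structure; once this is made explicit via the induction above, the rest is just reading off the identity.
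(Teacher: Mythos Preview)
Your argument is correct and follows essentially the same approach as the paper: both compute the product $\widetilde{G}_1(i_1)\cdots\widetilde{G}_d(i_d)$, identify that its $(j,\ell)$ block equals $G_1^{(j)}(i_1)G_2^{(\ell)}(i_2)\prod_{k\geq 3}G_k(i_k)$, and observe that the trace picks out the diagonal $j=\ell$, which recombines via block multiplication into $G_1(i_1)G_2(i_2)\cdots G_d(i_d)=T(i_1,\ldots,i_d)$. The only difference is cosmetic: the paper groups $H=\prod_{k=3}^d G_k(i_k)$ in one step and multiplies by $\mathrm{blockdiag}(H)$ directly, while you run an explicit induction over $k$; both are equivalent since a product of block-diagonal matrices with matching block structure is block-diagonal with the blockwise products.
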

\begin{proof}
Write $G_1(i_1) = \bigl[\begin{smallmatrix}
        G_1^{(1)}(i_1) & \cdots &  G_1^{(\widetilde{r}_0)}(i_1)
    \end{smallmatrix}\bigr]$, and $
    G_2(i_2) = \bigl[\begin{smallmatrix}
        G_2^{(1)}(i_2)^T &
        \cdots &
          G_2^{(\widetilde{r}_0)}(i_2)^T
    \end{smallmatrix}\bigr]^T  ,$ where each $G_1^{(j)}(i_1) \in \mathbb{R}^{1\times \frac{r_1}{\widetilde{r}_0}}$ and $G_2^{(j)}(i_2) \in \mathbb{R}^{\frac{r_1}{\widetilde{r}_0} \times r_2}$. Alg.~\ref{alg:insert_edge} returns $
\widetilde{G}_1(i_1) = \bigl[ \begin{smallmatrix}
        G_1^{(1)}(i_1)^T &
        \cdots & 
          G_1^{(\widetilde{r}_0)}(i_1)^T
    \end{smallmatrix}\bigr]^T  ,$ $
    \widetilde{G}_2(i_2)  =  \bigl[\begin{smallmatrix}
        G_2^{(1)}(i_2) & \cdots &  G_2^{(\widetilde{r}_0)}(i_2)
    \end{smallmatrix}\bigr]$
and $\widetilde{G}_k(i_k) = \text{blockdiag}\left(G_k(i_k)\right)$ for $k  = 3, \ldots , d.$ Writing $H = \prod_{k=3}^d G_k(i_k)$, this gives
\begin{equation}
\begin{split}
\widetilde{T}(i_1 , \ldots , i_d) &= \text{Trace} \begin{bmatrix}
       G_1^{(1)}(i_1) G_2^{(1)}(i_2) &  \cdots &  G_1^{(1)}(i_1) G_2^{(\widetilde{r}_0)}(i_2) \\
       \vdots &   \ddots & \vdots \\
       G_1^{(\widetilde{r}_0)}(i_1) G_2^{(1)}(i_2) &  \cdots &  G_1^{(\widetilde{r}_0)}(i_1) G_2^{(\widetilde{r}_0)}(i_2) \\
    \end{bmatrix} \text{blockdiag}\left( H\right) \\
    &= \sum_{\alpha_1 = 1}^{\widetilde{r}_0} \text{Trace}\left( G_1^{(\alpha_1)}(i_1) G_2^{(\alpha_1)}(i_2)H\right) = \text{Trace}\left( G_1(i_k)G_2(i_k) H\right) = T(i_1, \ldots , i_d).
    \end{split}
\end{equation}
\end{proof}

Just as for Alg.~\ref{alg:delete_edge}, Alg.~\ref{alg:insert_edge} should be combined with an application of TR-rounding, resulting in a total cost $\mathcal{O}\left( dnr^3\widetilde{r}_0^3\right)$, since the TR-ranks of $\widetilde{T}$ are no greater than $r\widetilde{r}_0$.

\begin{remark}
When combined with the TR-rounding procedure, Alg.~\ref{alg:delete_edge} and Alg.~\ref{alg:insert_edge} have cost $\mathcal{O}\left( dnr^3\widetilde{r}_0^3\right)$ and $\mathcal{O}\left( dnr^3r_{k-1}^3\right)$. The question of deleting and inserting edges was considered also in \cite{handschuh2012changing}, producing algorithms with complexity $\mathcal{O}\left(dn^4r^6 + n^6r^6\right)$ and $\mathcal{O}\left(dn^4r^3\widetilde{r}_0^3\right)$, respectively. Our results therefore have complexity lower by a factor greater than $\text{max}_k n_k^3$.
\end{remark}

\begin{remark}\label{rem:faster_ins_del}
The rounding procedure applied to the result of Alg.~\ref{alg:insert_edge} can be costly, since the ranks of the resulting tensor $T$ can be large if $r_0$ is large. To mitigate this, let $\gamma = (1, d, d-1, \ldots , 2)$. Note that $T^{\gamma}$ is in the form $T^{\gamma} = \sum_{\alpha = 1}^{\widetilde{r}_0} T'_{\alpha_0}$, where $T'_{\alpha_0}$ has first core $G_2^{(\alpha_0)}(i_2)$, middle cores $G_k(i_k)$ for $k = 3, \ldots , d$ and last core $G_1^{(\alpha_0)}(i_1)$. These can be added successively, with a rounding procedure with accuracy $\frac{\varepsilon}{r_0}$ applied after each formal addition. The ranks are then reduced at every stage, leading to lower complexity. Similarly, for Alg.~\ref{alg:delete_edge}, the rounding procedure can be performed after each addition.
\end{remark}

\begin{remark}
Alg.~\ref{alg:insert_edge} can also be used to transform a TR-representation with ranks $r_k$ into a TR-representation with ranks $r_0\cdot \rho_1, \frac{r_1}{\rho_1}, r_2\cdot \rho_1 ,\ldots , r_d\cdot \rho_1$, where $\rho_1$ is any divisor of $r_1$.
\end{remark}

The framework introduced in Algs.~\ref{alg:delete_edge} - \ref{alg:insert_edge} can be applied to transform a general graph $\mathcal{G}$ into another graph $\mathcal{G}'$ by successively inserting and deleting edges. Thm.~\ref{alg:graph_truncation} can be used to reduce storage sizes of the core tensors involved in the calculations.
\subsection{Applications of edge insertion and deletion}
We now present two applications of Algs.~\ref{alg:delete_edge} - \ref{alg:insert_edge}.
\subsubsection{Greedy algorithm for selecting graph structure}
Consider a tensor $T$ for which we would like to choose a graph $\mathcal{G}$ with which to represent $T$. We introduce a cost function $f_T(\mathcal{G})$ measuring the cost associated to representing $T$ with the graph $\mathcal{G}$ (storage cost, maximum/average rank et.c.) and would like to minimize $f_T(\mathcal{G})$. Clearly, an exhaustive search over all possible graph structures is computationally intractable because of their number. We therefore limit the scope to searching over graphs $\mathcal{G}$ with a certain structure and let the set of permissible graphs be denoted by $\mathbb{G}$. We consider the problem of finding $\mathcal{G}^* = \argmin_{\mathcal{G}\in \mathbb{G}} f_T(\mathcal{G}).$ Alg.~\ref{alg:greedy_opt} details the simplest attempt at finding a low-cost graph, although we do not claim any convergence to the minimum value. Two examples attaining low cost are shown in Sec.~\ref{sec:ex_insert_edge}. 
\begin{algorithm}
\caption{Greedy algorithm for selecting graph structure}\label{alg:greedy_opt}
\begin{algorithmic}[1]
 \Require{$d$-tensor $T$, set of permissible graphs $\mathbb{G}$.}{}
 \Ensure{Graph $\mathcal{G}\in \mathbb{G}$ and $T$ in $\mathcal{G}$-graph based format.}{}\\
Take an initial $\mathcal{G} \in \mathbb{G}$. \\
Compute a representation of $T$ in $\mathcal{G}$-format using Alg.~\ref{alg:TR-SVD} and inserting or deleting edges using Algs.~\ref{alg:delete_edge} - \ref{alg:insert_edge}.
\For {$e \in \{1, \ldots , d\}^2 $} 
\State $\mathcal{G}' = \mathcal{G} \cup e$
\If {$\mathcal{G'} \in \mathbb{G}$}
 \State Compute representation of $T$ in $\mathcal{G'}$-format using Alg.~\ref{alg:insert_edge}
 \If {$f_T(\mathcal{G'}) < f_T(\mathcal{G})$}
 \State $\mathcal{G} = \mathcal{G}'$
 \EndIf
 \EndIf
\EndFor
\end{algorithmic}
\end{algorithm}

\subsubsection{Converting canonical format into TR-format}\label{sec:can_to_TR}
We also detail a conversion of a tensor in canonical format to TR-format with increased compression ratio. This can be seen as a special case of the previous section when the initial graph is the chain on $d$ elements and $\mathbb{G}$ is precisely the cycle on $d$ elements in the same order as the chain. Given a tensor $T$ in rank-$r$ canonical format $T = \sum_{i=1}^r \bigotimes_{j=1}^d v_i^{(j)},$ a TT-representation of $T$ is defined \cite{oseledets2011tensor} by
\begin{align}\label{eq:cp_TT}
\begin{split}
G_1(i_1) &= \left[ v_1 ^{(1)}(i_1), \ldots , v_r ^{(1)}(i_1) \right], \qquad G_d(i_d) = \left[ v_1 ^{(d)}(i_d), \ldots , v_r ^{(d)}(i_d) \right]^T, \\ 
G_k(i_k) &= \text{diag}\left( v_1 ^{(k)}(i_k), \ldots , v_r ^{(k)}(i_k) \right), \quad k = 2, \ldots , d-1.
\end{split}
\end{align}
In the TR-format, we now have an additional degree of freedom to choose the starting rank $r_0$ as any divisor of $r$, and therefore apply Alg.~\ref{alg:insert_edge} with this choice. The optimal choice of $r_0$ can be chosen in a loop where the representation with currently smallest storage cost is retained. A numerical example is shown in Sec.~\ref{sec:ex_CP2TR}.

\begin{remark}
Unlike in previous work \cite{khoromskij2012tensors,zhao2016tensor}, the end cores $G_1$ and $G_d$ are treated differently from the other cores. This is done in order to ensure that the rounding procedure is able to reduce the TR-ranks of the tensor (cf. Remark~\ref{rem:end_cores}).
\end{remark}

\section{Computational experiments}\label{sec:examples}
This section is devoted to numerical studies of the algorithms presented in Secs.~\ref{sec:heur} - \ref{sec:graph}. All computations were carried out on a MacBook Pro with a 3.1 GHz Intel Core i5 processor and 16 GB of memory.
\subsection{Converting from full format to TR-format}\label{sec:ex_full2TR}
We convert a tensor given in full format into a TR-representation and compute its storage cost. We compare the TT-representation with $r_0 = 1$, Alg.~\ref{alg:TR-SVD} using a balanced representation \cite{espig2012note,zhao2016tensor} with $r_0 = \argmin  \abs{r_0 - \frac{\text{rank}_{\delta}\left( T_{\langle 1 \rangle}\right)}{r_0}}$, and Alg.~\ref{alg:reduced_storage_TR-SVD}, to Alg.~\ref{alg:reducedTR-SVD}. We do not compare to other algorithms for TR-decompositions, since these have already been compared to the TR-SVD algorithm in the literature\cite{zhao2016tensor}. The tensors were taken to be discretizations of functions $f(x_1, \ldots , x_d)$ on a grid in $\mathbb{R}^d$ with $n$ discretization points in each dimension.

We first demonstrate our approach on a set of generic examples. It is easy to see that tensors with entries sampled from an absolutely continuous distribution have unfolded matrices of full rank, with probability one. These therefore lead to decompositions with high rank. We will instead study a different set of randomly generated, generic examples. We choose $f(x)$ to be a multivariate polynomial with randomly generated exponents. Specifically, we generate exponents $\alpha_{ij}$ uniformly in the set $\{ 1, \ldots , m_\text{deg}\}$, and compress the discretization of the $5$-dimensional function
\begin{equation}\label{eq:rand_poly}
\sum_{j=1}^{m_\text{term}} x_1^{\alpha_{1j}}x_2^{\alpha_{2j}}\cdot \ldots \cdot x_5^{\alpha_{5j}},
\end{equation}
into the TR-format. The parameters were set to $d = 5$, $n =20$, $\varepsilon = 10^{-12}$ and the grid to be $[0,1]^d$. The result is shown in Table~\ref{table:ex_full2TRrand} as a function of $m_\text{deg}$ and $m_\text{term}$. As an average over $100$ samples of $\alpha_{ij}$, we see that both Alg.~\ref{alg:reduced_storage_TR-SVD} and Alg.~\ref{alg:reducedTR-SVD} perform better than the TT-format, whereas Alg.~\ref{alg:TR-SVD} with the balanced representation $r_0 = \argmin  \abs{r_0 - \frac{\text{rank}_{\delta}\left( T_{\langle 1 \rangle}\right)}{r_0}}$ performs significantly worse. This shows that taking $r_0$ and the choice of cyclic shift into account can have considerable effect on the compression ratio.

\begin{table}[ht]
{\footnotesize
\caption{Storage cost and runtime in TR-format as fraction of storage cost and runtime in TT-format, for tensors in Eq.\eqref{eq:rand_poly}. Runtimes were computed averaged over $100$ random samples of $\alpha_{ij}$.}

\begin{center}
\setlength\tabcolsep{3.9pt}
\resizebox{\textwidth}{!}{\begin{tabular}{c | c c c c c | c c c c c} 
\hline \multicolumn{10}{c}{\textbf{Alg.~\ref{alg:TR-SVD}}} \\
\hline 
 
 & \multicolumn{5}{c}{Storage quotient} & \multicolumn{5}{c}{Runtime quotient} \\
\diagbox{$m_\text{deg}$}{$m_\text{term}$} &2&6&10&14&18 &2&6&10&14&18 \\
\hline
$2$  & 1  &     1 &     1  & 1 & 1 & 0.9887 & 0.9923 & 0.9974 & 0.9947 & 0.9936\\ 
$4$ & 1 & 1.4661 & 1.6734 & 1.7357 &1.6381 & 0.9950 & 1.0063 & 1.0079 & 1.0063 & 1.0067 \\
$6$& 1 & 1.6218 & 1.6359 &1.8878 & 1.9491 & 0.9924 & 1.0096 & 1.0127 & 1.0335 & 1.0440 \\
$8$& 1 & 2.0743& 1.7643 &1.7724 & 2.0492 & 0.9940 & 1.0136 & 1.0083 & 1.0159 & 1.0590 \\
$10$& 1 & 1.9221 & 1.9383 & 2.6276& 3.1629 & 0.9954 & 1.0088 & 1.0155 & 1.0837 & 1.1574 \\ [1ex] 
\hline 

\hline \multicolumn{10}{c}{\textbf{Alg.~\ref{alg:reduced_storage_TR-SVD}}} \\
\hline 
 
 & \multicolumn{5}{c}{Storage quotient} & \multicolumn{5}{c}{Runtime quotient} \\
\diagbox{$m_\text{deg}$}{$m_\text{term}$} &2&6&10&14&18 &2&6&10&14&18 \\
\hline
$2$  &     0.9093  &  0.9218  &  0.9361  &  0.9689  &  0.9783 &    12.0628  & 14.4779  & 14.6611 &  14.6384  & 14.6350 \\ 
$4$ &     0.9129  &  0.8745  &  0.8845  &  0.9114  &  0.9032 &    13.6462  & 16.3108  & 18.2493  & 18.9628  & 19.0863 \\
$6$&     0.9489  &  0.8838  &  0.8953  &  0.9131  &  0.9074  &    13.9525  & 16.8535  & 17.7344  & 20.8090  & 22.0073\\
$8$&     0.9437  &  0.9143  &  0.9006  &  0.9080  &  0.9260 &    13.9874  & 17.5574  & 19.0868  & 19.4535  & 19.8855\\
$10$&     0.9550  &  0.9287  &  0.9147  &  0.9178  &  0.9366 &    14.2610  & 17.4405  & 19.6042  & 20.6393 &  21.2999\\ [1ex] 
\hline 

\hline \multicolumn{10}{c}{\textbf{Alg.~\ref{alg:reducedTR-SVD}}} \\
\hline 
 
 & \multicolumn{5}{c}{Storage quotient} & \multicolumn{5}{c}{Runtime quotient} \\
\diagbox{$m_\text{deg}$}{$m_\text{term}$} &2&6&10&14&18 &2&6&10&14&18 \\
\hline
$2$  &     0.9412  &  0.9682  &  0.9469  &  0.9736  &  0.9783 &     5.5980  &  5.5146   & 5.5340  &  5.5154  &  5.5294\\ 
$4$ &     0.9785  &  0.9539  &  0.9293  &  0.9500  &  0.9454&     5.5566  &  5.2291  &  5.1481  &  5.0998  &  5.0395 \\
$6$&    0.9827  &  0.9360  &  0.9496  &  0.9513  &  0.9470 &     5.5174  &  5.0911  &  4.8143  &  4.6878   & 4.6005\\
$8$&     0.9812  &  0.9718  &  0.9453  &  0.9571  &  0.9764 &     5.5246  &  4.9621  &  4.6655  &  4.4752  &  4.3403\\
$10$&     0.9962  &  0.9866  &  0.9605  &  0.9588  &  0.9845 &     5.4981  &  4.9188  &  4.5702  &  4.2490  & 4.1036\\ [1ex] 
\hline 
\end{tabular}
}
\label{table:ex_full2TRrand}
\end{center}
}
\end{table}

In order to distinguish the contributions of the shift and the choice of $r_0$, we also make the following comparisons. We first determine the compression ratios when using no shift of the variables. We choose the corresponding $r_0$ by exhaustive search, the choice of $r_0$ in the heuristic Alg.~\ref{alg:reducedTR-SVD} and the balanced choice $r_0 = \argmin  \abs{r_0 - \frac{\text{rank}_{\delta}\left( T_{\langle 1 \rangle}\right)}{r_0}}$. Next, we use the shift determined by Alg.~\ref{alg:reducedTR-SVD} and find the corresponding $r_0$ by exhaustive search and the balanced choice of $r_0 = \argmin  \abs{r_0 - \frac{\text{rank}_{\delta}\left( T_{\langle 1 \rangle}\right)}{r_0}}$. The result is shown in Table~\ref{table:ex_compTRrand}. Comparing to Table~\ref{table:ex_full2TRrand}, we see that both the choice of cyclic shift and $r_0$ contribute to higher compression ratios for both Alg.~\ref{alg:reduced_storage_TR-SVD} and Alg.~\ref{alg:reducedTR-SVD}.

\begin{table}[ht]
{\footnotesize
\caption{Storage cost and runtime in TR-format as fraction of storage cost and runtime in TT-format, for tensors in Eq.\eqref{eq:rand_poly}. Runtimes were computed averaged over $100$ random samples of $\alpha_{ij}$.}

\begin{center}
\resizebox{\textwidth}{!}{\setlength\tabcolsep{3.9pt}
\begin{tabular}{c | c c c c c | c c c c c | c c c c c} 
\hline \multicolumn{10}{c}{\textbf{No cyclic shift}} \\
\hline 
 
 & \multicolumn{5}{c}{Balanced choice} & \multicolumn{5}{c}{Exhaustive search}  & \multicolumn{5}{c}{Choice in Alg.~\ref{alg:reducedTR-SVD}} \\
\diagbox{$m_\text{deg}$}{$m_\text{term}$} &2&6&10&14&18 &2&6&10&14&18&2&6&10&14&18 \\
\hline
$2$  & 1  &     1 &     1  & 1 & 1 &    0.9458  &  0.9448  &  0.9569  &  0.9786  &  0.9867 &     0.9512  &  0.9782  &  0.9569   & 0.9836  &  0.9883\\ 
$4$ & 1 & 1.4661 & 1.6734 & 1.7357 &1.6381 & 0.9654  &  0.9350  &  0.9303  &  0.9438  &  0.9437 &     0.9885  &  0.9639   & 0.9393  &  0.9600  &  0.9554
\\
$6$& 1 & 1.6218 & 1.6359 &1.8878 & 1.9491 & 0.9752  &  0.9410  &  0.9517  &  0.9544  &  0.9480 &     0.9927  &  0.9460  &  0.9596  &  0.9613  &  0.9570
 \\
$8$& 1 & 2.0743& 1.7643 &1.7724 & 2.0492 &  0.9775  &  0.9569  &  0.9510  &  0.9494  &  0.9692 &     0.9912  &  0.9818   & 0.9553  &  0.9671  &  0.9864
\\
$10$& 1 & 1.9221 & 1.9383 & 2.6276& 3.1629 & 0.9812  &  0.9630  &  0.9590  &  0.9629  &  0.9747 &    1.0063  &  0.9966   & 0.9705  &  0.9688  &  0.9945 \\ [1ex] 
\hline 

\hline \multicolumn{10}{c}{\textbf{Cyclic shift as in Alg.~\ref{alg:reduced_storage_TR-SVD}}} \\
\hline

 & \multicolumn{5}{c}{Balanced choice} & \multicolumn{5}{c}{Exhaustive search}  & \multicolumn{5}{c}{Choice in Alg.~\ref{alg:reducedTR-SVD}} \\
\diagbox{$m_\text{deg}$}{$m_\text{term}$} &2&6&10&14&18 &2&6&10&14&18&2&6&10&14&18 \\
\hline
$2$  &         0.9412  &  0.9682  &  0.9469  &  0.9736  &  0.9783 &
         0.9299  &  0.9360  &  0.9419  &  0.9703  &  0.9783 &
    0.9412  &  0.9682  &  0.9469  &  0.9736  &  0.9783
\\ 
$4$ &         0.9785  &  1.3497  &  1.7010   & 1.7979  &  1.6809 &
         0.9579  &  0.9330  &  0.9138 &   0.9393  &  0.9387 &
    0.9785  &  0.9539 &   0.9293  &  0.9500  &  0.9454
\\
$6$&        0.9827  &  1.6332  &  1.6542  &  1.7661  &  1.8563 &
         0.9714 &   0.9247  &  0.9385  &  0.9446  &  0.9355 &
    0.9827 &   0.9360  &  0.9496  &  0.9513  &  0.9470
\\
$8$&         0.9812  &  2.0878  &  1.7566  &  1.8493  &  1.9250 &
         0.9663  & 0.9494  &  0.9361 &   0.9440  &  0.9653 &
    0.9812  &  0.9718  &  0.9453  &  0.9571  &  0.9764
\\
$10$&         0.9962  &  1.7905  &  2.0889  &  2.4774  &  3.0893 &
         0.9812  &  0.9643  &  0.9453  &  0.9495  &  0.9734 &
    0.9962  &  0.9866  &  0.9605  &  0.9588  &  0.9845
\\ [1ex] 
\hline 
\end{tabular}
}
\label{table:ex_compTRrand}
\end{center}
}
\end{table}

Lastly, we study a few examples in greater detail, shown in Table~\ref{table:ex_full2TR}. Unless otherwise noted, the parameters were set to $d = 5$, $n =20$, $\varepsilon = 10^{-12}$ and the grid to be $[0,1]^d$. The balanced choice of $r_0$ is ambiguous in that both $r_0$ and $\frac{\text{rank}_{\delta}\left( T_{\langle 1 \rangle}\right)}{r_0}$ minimize the expression and we report the choice with the highest resulting storage cost.

The first function in Table~\ref{table:ex_full2TR} has a coupling of the $x_1$ and $x_d$ variables. Because of Ex.~\ref{ex:why_shift}, one could therefore expect a correctly chosen cyclic shift of the variables to lead to significant storage savings, which is indeed the case. The second example has couplings between both $x_1$, $x_d$, and $x_1$, $x_2$. One might then expect the balanced choice of $r_0$ in Alg.~\ref{alg:TR-SVD} to lead to good compression ratios. It is therefore somewhat surprising that this is not the case, and that Alg.~\ref{alg:reduced_storage_TR-SVD} gives an order of magnitude larger compression ratio.

The results suggest again that finding the optimal choice of $r_0$ and cyclic shift $\tau$ are crucial for storage savings when using the TR-format. Finally, we study if multiple cyclic shifts can attain the optimal compression ratios. For each choice of cyclic shift $(1, d, d-1, \ldots, 2)^k$, for $k=0, \ldots , d-1$, we choose the corresponding $r_0$ both by exhaustive search and the balanced choice of $r_0 = \argmin  \abs{r_0 - \frac{\text{rank}_{\delta}\left( T_{\langle 1 \rangle}\right)}{r_0}}$. The results are shown in Table~\ref{table:ex_full2TRdetail}, and indicate that several shifts can lead to the highest compression ratio.

\begin{table}[ht]
{\footnotesize
\caption{Storage cost and runtime in TR-format as fraction of storage cost and runtime in TT-format, for tensors in Sec.~\ref{sec:ex_full2TR}. Runtimes were computed averaged over $100$ function calls. Alg.~\ref{alg:TR-SVD}$^*$ is the version of Alg.~\ref{alg:TR-SVD} in which $r_0$ is computed by $r_0 = \argmin  \abs{r_0 - \frac{\text{rank}_{\delta}\left( T_{\langle 1 \rangle}\right)}{r_0}}$.}

\begin{center}
\setlength\tabcolsep{3.9pt}
\begin{tabular}{c | c c c | c c c} 
\hline\hline 
 &\multicolumn{3}{c}{Storage quotient} & \multicolumn{3}{c}{Runtime quotient} \\
$f(x)$& Alg.~\ref{alg:TR-SVD}$^*$ & Alg.~\ref{alg:reduced_storage_TR-SVD}& Alg.~\ref{alg:reducedTR-SVD}& Alg.~\ref{alg:TR-SVD}$^*$ & Alg.~\ref{alg:reduced_storage_TR-SVD}& Alg.~\ref{alg:reducedTR-SVD} \\[0.5ex]
\hline
$\exp{\left( \cos(x_1x_d + \sum_{k=2}^{d-1} x_k)\right)}$  & 0.518  &     0.070 &     0.070  & 1.694 & 19.168 & 2.431\\ 
$\exp{\left( \cos(x_1x_d + x_1x_2 + \sum_{k=3}^{d-1} x_k)\right)}$ & 1.796 & 0.298 & 0.298 & 1.171 &25.218 &2.629 \\
Park function 1 \cite{simulationlib}& 0.941 & 0.217 & 0.217 &0.661 & 15.158& 1.563\\
($d=4$, grid $[10^{-10}, 1]^d$) & & & & & & \\
$\left(1 + \sum_{k=1}^d x_k^2\right)^{-\frac{1}{2}}$ & 2.776 & 1 & 1 &1.153 & 24.663 & 3.407\\
 $\exp\left( \sum_{k=1}^3 x_k x_{k+1}x_{k+2} + x_4 x_5  x_1\right)$& 1.2771 & 0.7674 & 1 &1.1257 &21.5445 &3.1206 \\ [1ex] 
\hline 
\end{tabular}
\label{table:ex_full2TR}
\end{center}
}
\end{table}

\begin{table}[ht]
{\footnotesize
\caption{Storage cost in TR-format as fraction of storage cost in TT-format and as function of the cyclic shift $(1, d, d-1, \ldots, 2)^k$. $r_0$ is chosen by exhaustive search and the balanced choice of $r_0$ in Alg.~\ref{alg:TR-SVD}.}

\begin{center}
\setlength\tabcolsep{3.9pt}
\resizebox{\textwidth}{!}{\begin{tabular}{c | c c c c c | c c c c c} 
\hline\hline 
 &\multicolumn{5}{c}{Exhaustive search} & \multicolumn{5}{c}{Balanced choice} \\ 
$k$& $0$ & $1$ & $2$ & $3$ & $4$ & $0$ & $1$ & $2$ & $3$ & $4$  \\[0.5ex]
\hline
 $\exp\left( \sum_{k=1}^3 x_k x_{k+1}x_{k+2} + x_4 x_5  x_1\right)$ &1 & 1 & \textbf{0.7674} & \textbf{0.7674} & 1& 1.2771 & 1.8851 & 1.7819 & 1.3999 & 1.8851 \\ 
$\exp{\left( \cos(x_1x_d + x_1x_2 + \sum_{k=3}^{d-1} x_k)\right)}$ & 1 &\textbf{0.2982}&\textbf{0.2982}&\textbf{0.2982}&\textbf{0.2982}& 1.796 & 0.7115 & \textbf{0.2982} & 1.0406 & 0.5555 \\ 
Park function 1 \cite{simulationlib}  & \textbf{0.2169} & \textbf{0.2169} & \textbf{0.2169}& \textbf{0.2169} & - & 0.941 &0.8434 & 0.9478 & 0.9442 & -\\ [1ex] 
\hline 
\end{tabular}
}
\label{table:ex_full2TRdetail}
\end{center}
}
\end{table}

We finally include an example of compression of a real-world dataset. The Coil-100 dataset \cite{coil100} consists of photographs of $100$ objects at $70$ different viewing angles. We subsample each of these images to an $n\times n$ image, for varying $n$. This results in a tensor in $\mathbb{R}^{100\times 70 \times 3 \times n \times n}$, which we reshape into a tensor in $\mathbb{R}^{10\times 10\times 10 \times 7 \times 3 \times n \times n}$. The storage and run-times resulting from the compression algorithms are shown in Table~\ref{table:ex_coil2TR}. Note that the heuristic Alg.~\ref{alg:reducedTR-SVD} results in a near-optimal storage cost with very low run-time.

\begin{table}[ht]
{\footnotesize
\caption{Storage cost and runtime in TR-format as fraction of storage cost and runtime in TT-format, for the Coil-100 dataset. Alg.~\ref{alg:TR-SVD}$^*$ is the version of Alg.~\ref{alg:TR-SVD} in which $r_0$ is computed by $r_0 = \argmin  \abs{r_0 - \frac{\text{rank}_{\delta}\left( T_{\langle 1 \rangle}\right)}{r_0}}$.}

\begin{center}
\setlength\tabcolsep{3.9pt}
\begin{tabular}{c | c c c | c c c} 
\hline\hline 
 &\multicolumn{3}{c}{Storage quotient} & \multicolumn{3}{c}{Runtime quotient} \\
$n$& Alg.~\ref{alg:TR-SVD}$^*$ & Alg.~\ref{alg:reduced_storage_TR-SVD}& Alg.~\ref{alg:reducedTR-SVD}& Alg.~\ref{alg:TR-SVD}$^*$ & Alg.~\ref{alg:reduced_storage_TR-SVD}& Alg.~\ref{alg:reducedTR-SVD} \\[0.5ex]
\hline
$24$  & 1.617  & 0.750 & 0.783 & 1.743 & 30.62 & 0.943\\ 
$32$ & 2.030 & 0.650 & 0.655 & 1.322 &22.35 &0.732 \\
$40$& 2.384 & 0.540 & 0.540 &0.759 & 15.16& 0.352 \\ [1ex] 
\hline 
\end{tabular}
\label{table:ex_coil2TR}
\end{center}
}
\end{table}

\subsection{Implicit PDE-solvers}\label{sec:impl-pde}
This section considers simple implicit finite difference solvers of linear PDEs and compares the storage cost and runtime of solvers in the TR- and TT-format, respectively. We consider the wave equation with zero Dirichlet boundary conditions
\begin{align}\label{eq:wave}
&\begin{cases}
\frac{\partial^2 u}{\partial t^2}(x,t) &= \Delta u(x,t), \quad x \in [0,1]^d, t\in [0, T] \\
u(x,0) &= u_0(x), \\
u(x,t) &= 0 \text{ on the boundary of } [0,1]^d.
\end{cases}
\end{align}
For the simple solver, we firstly represent a first-order finite difference approximation of the Laplacian in TR- and TT-format. We consider a $u_0$ given in canonical format with $n=2^5$ discretization points in each dimension and convert it into TT- and into the optimal TR-format in Sec.~\ref{sec:can_to_TR}, using Rem.~\ref{rem:faster_ins_del}. Afterwards, implicit Euler is used as time-marching, and we explicitly compute the inverse $\left(I - \left(\Delta t\right)^2\Delta\right)^{-1}$ to be applied at each time step in the TT-format, using the procedure of Oseledets and Dolgov \cite{oseledets2012solution}. After each iteration, we perform a rounding using either TT-rounding, or TR-rounding. We choose $u_0(x)$ to have significantly higher compression ratio in the TR-format than the TT-format and investigate whether or not the TR-rounding procedure manages to keep the compression ratio high, even after a large number of iterations. We show the results for the two initial functions
\begin{align}
u_{0,1}(x) = \sum_{k=1}^{20}\sin(kx_1)\sin(kx_d), \qquad
u_{0,2}(x) = x_1x_d + \sum_{k=2}^{d-1}x_k,
\end{align}
in Table~\ref{table:ex_wave}. The parameters used were $\varepsilon = 10^{-12}$, $\Delta t = 10^{-3}$, and $n=2^5$ discretization points in each dimension. The results show that the TR-format maintains lower storage cost than the TT-format after many iterations, possibly at the expense of longer runtime.

\setlength\tabcolsep{6pt}
\begin{table}[ht]
{\footnotesize
\caption{Storage cost in TR-format as fraction of storage cost in TT-format after $500$ time steps, for tensors in Sec.~\ref{sec:impl-pde}.} 
\begin{center}
\begin{tabular}{c | c c | c c} 
\hline\hline 

& \multicolumn{2}{c}{$u_{0,1}(x)$}  & \multicolumn{2}{c}{$u_{0,2}(x)$}\\
$d$& Storage quotient & Runtime quotient & Storage quotient & Runtime quotient \\[0.5ex]
\hline
$5$  &  0.0499 &   1.2195    &   0.5322  & 1.3599\\ 
$10$ & 0.0225 & 0.5021 & 0.4840 & 1.0507\\
$20$  & 0.0170 & 0.3632 & 0.4778 & 0.9796\\
 $30$ & 0.0295 & 0.9246& 0.5212&1.0240\\ [1ex] 
\hline 
\end{tabular}
\label{table:ex_wave} 
\end{center}
}
\end{table}

\subsection{Converting canonical format to TR-format}\label{sec:ex_CP2TR}
We consider the discretization of a function given in canonical format on a grid in $[0,1]^d$ with $n = 2^5$ discretization points in each dimension. Converting the resulting canonical decomposition into the TR-format in Eq.~\eqref{eq:cp_TT} and using Rem.~\ref{rem:faster_ins_del}, we find the optimal storage cost among all permutations and choices of $r_0$ and compare the costs to the TT-format obtained by $r_0 = 1$. The precision of the rounding was set to $\varepsilon = 10^{-12}$. We consider the four examples

\begin{align}\label{eq:ex_CP2TR_tensors}
T_1 &= x_1x_d + \sum_{k=2}^{d-1}x_k, \qquad\qquad\qquad \qquad T_2 = \sum_{k=1}^{\frac{d}{2}} x_k x_{d-k+1}, \quad d \text{ even} \\
T_3 &=  \sum_{k=1}^{\frac{d}{2}-1} x_k x_{k+1}x_{d-k}x_{d+1-k}, \quad d \text{ even}, \quad\qquad T_4 = \sum_{j=1}^{10} x_1^{\alpha_{1j}}x_2^{\alpha_{2j}}\cdot \ldots \cdot x_d^{\alpha_{dj}},
\end{align}
where the $\alpha_{ij}$ are positive integers generated uniformly from the set $\{1, \ldots , 20\}$, and averaged over $100$ samples. Since the variables $x_1$,$x_d$ in $T_1$ are coupled, one can expect high compression ratios after choosing an appropriate cyclic shift, but it is less obvious if this is also true for the tensors $T_2,T_3$ and $T_4$. Fig.~\ref{fig:ex_CP2TR} shows the results, with good storage savings for a range of dimensions and functions.

\begin{figure}
\begin{center}
  \includegraphics[width=\textwidth]{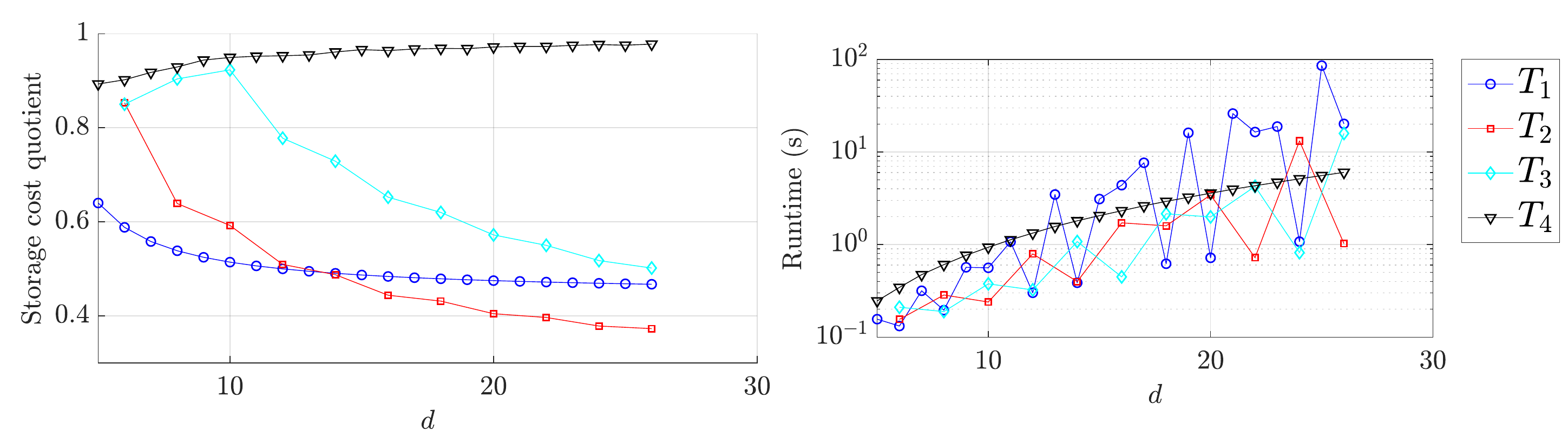}
  \caption{Left: Storage cost of TR-representation of tensors in Sec.~\ref{sec:ex_CP2TR} as fraction of storage cost in TT-format and as function of $d$. Right: runtime in seconds.}
  \label{fig:ex_CP2TR}
  \end{center}
  \end{figure}

\subsection{Selecting graph structure}\label{sec:ex_insert_edge}
We apply Alg.~\ref{alg:greedy_opt} to discretizations of the two functions
\begin{align}
f(x) = \sum_{k=1}^{20}\sin(kx_1)\sin(kx_{\lfloor \frac{d}{2} \rfloor}), \quad g(x) = \sum_{k=1}^{\frac{d-1}{2}} x_{2k-1} x_{2k+1}, \quad d \text{ odd},
\end{align}
given in the canonical format. The grid $[0,1]^d$ used $n=2^5$ grid points in each dimension. The set of permissible graph structures $\mathbb{G}$ for $f$ and $g$ were set to be all cycles with vertices $i_1, \ldots , i_d$ and exactly zero or one chords, and all chains with edges between every other index $i_{k}, i_{k+2}$, for $k = 1, 3, 5, \ldots , d-2$, respectively. For $f$, we repeated the application of Alg.~\ref{alg:greedy_opt} to compare all graphs with exactly one chord. The initial graph format was the TR-format and the TT-format, respectively. The rounding accuracy was set to $\varepsilon = 10^{-9}$. The result is presented in Fig.~\ref{fig:insert_edge} and clearly shows storage savings compared to the TT- and TR-formats.

\begin{figure}
\begin{center}
  \includegraphics[width=0.5\textwidth]{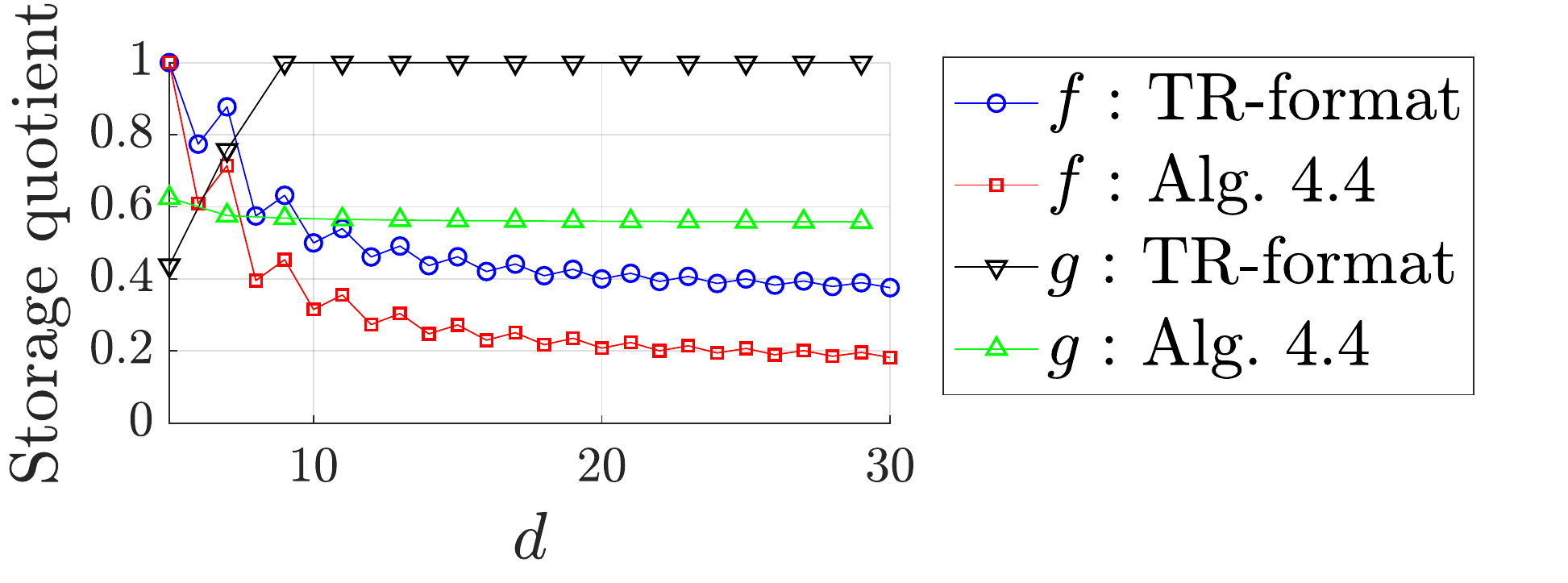}
  \caption{Storage cost of representations of tensors in Sec.~\ref{sec:ex_insert_edge} as fraction of storage cost in TT-format and as function of $d$.}
  \label{fig:insert_edge}
  \end{center}
\end{figure}

\section{Conclusions}
In this paper, we studied efficient tensor representations and computation in the tensor ring format. Firstly, we showed theoretically and numerically how two degrees of freedom in SVD-based algorithms for converting a tensor in full format into TR-format are crucial for obtaining low-cost representations. A heuristic algorithm achieving a low-cost representation with low runtime was introduced and tested numerically. Secondly, we presented a rounding procedure for the tensor ring format and showed how this required redefining common linear algebra operations to obtain a reduction of the storage-cost. Lastly, we devised algorithms for transforming the graph structure of a tensor in a graph-based format, producing even higher compression ratios. Numerical examples achieved up to more than an order of magnitude higher compression ratios than previous approaches to using the tensor ring format, without significantly affecting the runtime. An important direction for future work would be to also extend our algorithms to the case of incomplete tensors, where not all entries are used for the decomposition into the tensor-ring format \cite{khoo2017efficient}. Another important direction is to obtain a more thorough understanding of the geometric and algebraic properties of the TR-format.

\bibliographystyle{wileyNJD-AMS}
\bibliography{references}
\end{document}